\documentclass[11pt, leqno]{amsart}
\usepackage{amsmath, amssymb, latexsym}
\usepackage{mathrsfs}
\usepackage{enumerate}
\usepackage{color}
\usepackage[colorlinks=true, pdfstartview=FitV, linkcolor=blue,%
citecolor=blue, urlcolor=blue]{hyperref}
\usepackage[all, knot]{xy}
\xyoption{arc}
\usepackage{tikz-cd}
\usetikzlibrary{cd}
\usepackage{tikz}
\usepackage{amsmath}
\usetikzlibrary{cd}
\usetikzlibrary{arrows.meta}

\setlength{\textwidth}{15cm} \setlength{\textheight}{18cm}
\setlength{\oddsidemargin}{1.3cm} \setlength{\evensidemargin}{1.3cm}

\numberwithin{equation}{section}

\newtheorem{theorem}{Theorem}[section]
\newtheorem{corollary}[theorem]{Corollary}
\newtheorem{lemma}[theorem]{Lemma}
\newtheorem{proposition}[theorem]{Proposition}

\newtheorem{example}[theorem]{Example}

\theoremstyle{definition}
\newtheorem{definition}[theorem]{Definition}
\newtheorem{remark}[theorem]{Remark}

\newcommand{\Hom}{\operatorname{Hom}}

\newcommand{\Q}{\mathbf{Q}}

\newcommand{\Z}{\mathbf{Z}}
\newcommand{\h}{\mathfrak{h}}
\newcommand{\g}{\mathfrak{g}}

\title[Quantum Borcherds-Bozec algebras]
{Quantum Borcherds-Bozec algebras and\\ their integrable representations}

\author[Seok-Jin Kang]{Seok-Jin Kang$^{*}$}
\address{Korea Research Institute of Arts and Mathematics, 
Asan-si, Chungcheongnam-do, 31551, Korea}
\email{soccerkang@hotmail.com}

\thanks{$^{*}$ This research was supported by Hankuk University of Foreign Studies Research Fund.}

\author[Young Rock Kim]{Young Rock Kim${}^{**}$}
\address{Graduate School of Education, Hankuk University of Foreign Studies, Seoul, 02450,  Korea}
\email{rocky777@hufs.ac.kr} %
\thanks{${}^{**}$ This research was supported by the Basic Science Research Program of the NRF (Korea) under grant No. 2015R1D1A1A01059643.}

\keywords{quantum Borcherds-Bozec algebra, triangular decomposition, integrable representation, character formula,  
complete reducibility}

\subjclass[2010] {17B37, 17B67, 16G20}
\begin{document}

\begin{abstract}
We investigate the fundamental properties of quantum Borcherds-Bozec algebras and their representations. Among others, we prove that the the quantum Borcherds-Bozec algebras have a triangular decomposition and the category of integrable representations is semi-simple. 
\end{abstract}

\maketitle

\section*{Introduction}

\vskip 2mm

The {\it quantum Borcherds-Bozec algebras} were introduced by T. Bozec in his geometric investigation of the representation theory of quivers with loops \cite{Bozec2014b, Bozec2014c}. He gave a construction of Lusztig's canonical basis for the positive half of a quantum Borcherds-Bozec algebra in terms of simple perverse sheaves on the representation variety of quivers with loops (cf. \cite{Lus90}). 

\vskip 2mm

On algebraic side, he developed the essential part of crystal basis theory for quantum Borcherds-Bozec algebras. First of all, he defined the Kashiwara operators on the integrable representations and on the negative half of a quantum Borcherds-Bozec algebra, which provides an important framework for Kashiwara's grand-loop argument (cf. \cite{Kas91}). 

\vskip 2mm 

He went on to define the notion of abstract crystals for quantum Borcherds-Bozec algebras and gave a geometric construction of the crystal for the negative half of a quantum Borcherds-Bozec algebra based on the theory of Lusztig's quiver varieties (cf. \cite{KS1997, KKS2009}). Moreover, using Nakajima's quiver varieties, he also gave a geometric construction of crystals for the integrable highest weight representations (cf. \cite{Saito2002, KKS2012}).   

\vskip 2mm

The purpose of this paper is to provide a rigorous foundation for the theory of quantum Borcherds-Bozec algebras and their representations. Among others, we prove that the quantum Borcherds-Bozec algebras have a triangular decomposition and the category of integrable representations is semi-simple\,; i.e., all the integrable representations are completely reducible. 

\vskip 2mm  

Compared with the theory of quantum Kac-Moody algebras and quantum Borcherds algebras, one of the main difficulties lies in the fact that the commutation relations between positive part and negative part are much more complicated for quantum Borcherds-Bozec algebras due to the Drinfeld-type defining relations. Also the co-multiplication formulas need a lot more careful treatment when we show that the quantum Borcherds-Bozec algebras have a triangular decomposition (Theorem \ref{thm:triangular}). In fact, we first need to verify that we have a well-defined co-multiplication on the quantum Borcherds-Bozec algebras (Proposition \ref{prop:co-mult}).  By a detailed analysis of Drinfeld-type commutation relations, we prove  one of the key ingredients for our main results (Proposition \ref{prop:hw}), which will lead to a characterization of irreducible highest weight representations with dominant integral highest weights.  Thanks to the character formula for integrable highest weight representations \cite{BSV2016}, we can follow the outline given in \cite{HK02, JKK05} to prove that all the integrable representations are completely reducible (Theorem \ref{thm:semi-simple}).   
 
\vskip 2mm 

This paper is organized as follows. In Section1, we recall Bozec's construction of quantum Borcherds-Bozec algebras.  Section 2 is devoted to a detailed analysis of Drinfeld-type commutation relations. We investigate the structure of quantum string algebras and prove that there exists a well-defined co-multiplication on the quantum Borcherds-Bozec algebras. In Section 3, we show that the quantum Borcherds-Bozec algebras have a triangular decomposition.  In Section 4, using the detailed analysis of Drinfeld-type commutation relations, we prove Proposition \ref{prop:hw}, a key ingredient for our main results. Finally, in Section 5, we prove that all the integrable representations are completely reducible. 

\vskip 2mm

{\it Acknowledgements}. The first author would like to express his sincere gratitude to Harbin Engineering University for their hospitality during his visit in July and November, 2019.

\vskip 8mm

\section{Quantum Borcherds-Bozec algebras}

\vskip 2mm

We first review Bozec's construction of quantum Borcherds-Bozec algebras \cite{Bozec2014c}.  

\vskip 2mm 

Let $I$ be an index set which can be countably infinite. An integer-valued matrix $A=(a_{ij})_{i,j \in I}$ is called an {\it
even symmetrizable Borcherds-Cartan matrix} if it satisfies the following conditions:
\begin{itemize}
\item[(i)] $a_{ii}=2, 0, -2, -4, ...$,

\item[(ii)] $a_{ij}\le 0$ for $i \neq j$,

\item[(iii)] there exists a diagonal matrix $D=\text{diag} (s_{i} \in \Z_{>0} \mid i \in I)$ such that $DA$ is symmetric.
\end{itemize}

\vskip 2mm

\noindent Set $I^{\text{re}}=\{i \in I \mid a_{ii}=2 \}$,
$I^{\text{im}}=\{i \in I \mid a_{ii} \le 0\}$ and
$I^{\text{iso}}=\{i \in I \mid a_{ii}=0 \}$.

\vskip 3mm

A {\it Borcherds-Cartan datum} consists of :

\ \ (a) an even symmetrizable Borcherds-Cartan matrix $A=(a_{ij})_{i,j \in I}$,

\ \ (b) a free abelian group $P$, the {\it weight lattice},

\ \ (c) $\Pi=\{\alpha_{i} \in P  \mid i \in I \}$, the set of {\it simple roots},

\ \ (d) $P^{\vee} := \Hom(P, \Z)$, the {\it dual weight lattice},

\ \ (e) $\Pi^{\vee}=\{h_i \in P^{\vee} \mid i \in I \}$, the set of {\it simple coroots}

\vskip 2mm

\noindent satisfying the following conditions

\vskip 1mm

\begin{itemize}

\item[(i)] $\langle h_i, \alpha_j \rangle = a_{ij}$ for all $i, j \in I$,

\item[(ii)] $\Pi$ is linearly independent over $\Q$,

\item[(iii)] for each $i \in I$, there exists an element $\Lambda_{i} \in P$ such that $$\langle h_j , \Lambda_i
\rangle = \delta_{ij} \ \ \text{for all} \ i, j \in I.$$
\end{itemize}

\vskip 2mm

Given an even symmetrizable Borcherds-Cartan matrix, it can be shown that such a Borcherds-Cartan datum always exists, which 
is not necessarily unique. The $\Lambda_i$  $(i \in I)$ are called the {\it fundamental weights}.

\vskip 3mm

We denote by
$$P^{+}:=\{\lambda \in P \mid \langle h_i, \lambda \rangle \ge 0 \\text{for all} \ i \in I \}$$ 
the set of {\it dominant integral weights}. The free abelian group $Q:= \bigoplus_{i \in I} \Z \, \alpha_i$ is called the {\it root lattice}. 
Set $Q_{+}: = \sum_{i \in I} \Z_{\ge 0}\, \alpha_{i}$ and $Q_{-}: = -Q_{+}$.  For $\beta = \sum k_i \alpha_i \in Q\textbf{}_{+}$, we define its {\it height} to be $|\beta|:=\sum k_i$.

\vskip 3mm

Let ${\mathfrak h} := \Q \otimes_{\Z} P^{\vee}$ be the {\it Cartan subalgebra}.  We define 
a partial ordering on ${\mathfrak h}^{*}$ by setting $\lambda \ge \mu$  if and only if $\lambda - \mu  \in Q_{+}$ for 
$\lambda, \mu \in {\mathfrak h}^{*}$.  

\vskip 3mm 

Since $A$ is symmetrizable and $\Pi$  is linearly independent over $\Q$, 
there exists a non-degenerate symmetric bilinear
form $( \ , \ )$ on ${\mathfrak h}^{*}$ satisfying
$$(\alpha_{i}, \lambda) = s_{i} \langle h_{i}, \lambda \rangle \quad
\text{for all}  \ \lambda \in {\mathfrak h}^{*}.$$

\vskip 3mm

For each  $i \in I^{\text{re}}$, we deinfe the {\it simple reflection} $r_{i} \in {\mathfrak h}^{*}$ by 
$$r_{i}(\lambda)= \lambda - \langle h_{i}, \lambda \rangle \,  \alpha_{i} \ \ \text{for} \ \lambda \in {\mathfrak h}^{*}.$$
The subgroup $W$ of $GL({\mathfrak h}^{*})$ generated by the simple reflections $r_{i}$ $(i \in I^{\text{re}})$ is called the {\it Weyl group} of the Borcherds-Cartan datum given above.  It is easy to check that $(\ , \ )$ is $W$-invariant.

\vskip 2mm 

We now proceed to define the notion of {\it quantum Borcherds-Bozec algebras}. Let $q$ be an indeterminate and set 
$$q_{i}  = q^{s_i}, \quad  q_{(i)} = q^{\frac{(\alpha_{i}, \alpha_{i})}{2}} = q_{i}^{\frac{a_{ii}}{2}}.$$ 
For each $i \in I^{\text{re}}$ and $n \in \Z_{> 0}$, we define  
$$[n]_{i} = \dfrac{q_{i}^{} - q^{-n}} {q_{i} - q_{i}^{-1}}, \quad [n]_{i} ! = [n]_{i} [n-1]_{i} \cdots [1]_{i}, \quad \left[  \begin{matrix} n \\ k\end{matrix}\right]_{i} = \dfrac{[n]_{i}!} {[k]_{i}! [n-k]_{i}!}.$$

Set $I^{\infty}:= (I^{\text{re}} \times \{1\}) \cup (I^{\text{im}}
\times \Z_{>0})$. For simplicity, we will often write $i$ for $(i,1)$ $(i \in I^{\text{re}})$.

\vskip 2mm

Let ${\mathscr F} = \Q(q) \langle f_{il} \mid (i,l) \in I^{\infty} \rangle$ 
be the free associative algebra defined on the set of alphabet 
$\{f_{il} \mid (i,l) \in I^{\infty} \}$. By setting $\text{deg} \, f_{il} = - l \alpha_{i}$, ${\mathscr F}$ becomes a
$Q_{-}$-graded algebra. 

\vskip 2mm 

We define a {\it twisted} multiplication on ${\mathscr F} \otimes {\mathscr F}$ by
\begin{equation} \label{eq:twisted}
(a_{1} \otimes a_{2})  (b_{1} \otimes b_{2}) = q^{-(\text{deg}\, a_{2}, \text{deg}\, b_{1})} a_{1} b_{1} \otimes a_{2} b_{2}  \quad \text{for} \  a_{1}, a_{2}, b_{1}, b_{2} \in {\mathscr F}.
\end{equation}

It can be shown that  there is an algebra homomorphism (called the {\it co-multiplication}) $\delta: {\mathscr F} \rightarrow {\mathscr F} \otimes {\mathscr F}$ given by 
\begin{equation} \label{eq:comult}
\delta(f_{il}) = \sum_{m+n=l} q_{(i)}^{-mn} f_{im} \otimes f_{in}.
\end{equation}
Here, we understand $f_{i0}=1$, $f_{il}=0$ for $l < 0$. 

\vskip 3mm

\begin{proposition}\cite{Lus10}\label{L-bilinear}
{\rm For each $\tau = (\tau_{il})_{(i,l)\in I^{\infty}}$ with $\tau_{il} \in \Q(q)$, there exists a symmetric bilinear form 
$(\ , \ )_{L}$ on ${\mathscr F}$ satisfying the following conditions:

\begin{enumerate}

\item[(a)] $(x, y)_{L} =0$  \ unless \ $\text{deg}\, x = \text{deg}\, y$,

\item[(b)] $(f_{il}, f_{il})_{L} = \tau_{il}$ \  for all $(i,l) \in I^{\infty}$,

\item[(c)] $(x, yz)_{L} = (\delta(x), y \otimes z)_{L} \ \ \text{for all} \ x, y, z \in {\mathscr F}$.
\end{enumerate}}
\end{proposition}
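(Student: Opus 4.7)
\vskip 2mm

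I would follow Lusztig's standard construction via the graded dual algebra. Each homogeneous component $\mathscr{F}_\beta$ ($\beta \in Q_{-}$) is finite-dimensional, so the restricted dual $\mathscr{F}^{\vee} := \bigoplus_{\beta \in Q_{-}} \mathscr{F}_\beta^{*}$ is well-defined. Since $\delta$ preserves total degree, it dualizes to a graded multiplication on $\mathscr{F}^{\vee}$, namely $(\xi \cdot \eta)(x) := (\xi \otimes \eta)(\delta(x))$. The associativity of this product requires coassociativity of $\delta$, which I would verify on each generator $f_{il}$ by checking that both iterations produce
\[
\sum_{a+b+c=l} q_{(i)}^{-(ab+bc+ca)}\, f_{ia} \otimes f_{ib} \otimes f_{ic};
\]
coassociativity then extends to all of $\mathscr{F}$ because $\delta$ is a twisted algebra homomorphism.

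\vskip 2mm

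For each $(i,l) \in I^{\infty}$ I would then pick a functional $\phi_{il} \in \mathscr{F}_{-l\alpha_{i}}^{*}$ with $\phi_{il}(f_{il}) = \tau_{il}$, the remaining values on a basis of $\mathscr{F}_{-l\alpha_{i}}$ being determined inductively so as to enforce symmetry (see below). By the universal property of the free algebra, there is a unique graded algebra homomorphism $\Phi \colon \mathscr{F} \to \mathscr{F}^{\vee}$ sending each $f_{il}$ to $\phi_{il}$; one then defines $(x, y)_{L} := \Phi(x)(y)$. Property (a) is immediate from the grading, (b) holds by construction, and unpacking the algebra-homomorphism identity $\Phi(yz) = \Phi(y) \cdot \Phi(z)$ yields the mirror relation
\[
(yz, x)_{L} \;=\; \sum (y, x_{(1)})_{L}\, (z, x_{(2)})_{L},
\]
writing $\delta(x) = \sum x_{(1)} \otimes x_{(2)}$, which is equivalent to property (c) under symmetry.

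\vskip 2mm

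The main obstacle, therefore, is establishing symmetry of the form. I would argue by induction on the total height $|\text{deg}\,x| + |\text{deg}\,y|$. The base cases reduce by (a) to pairings within a single homogeneous component and are handled directly using (b). For the inductive step, I would write one argument as a product of a generator and a strictly shorter factor, apply the mirror identity, and use coassociativity of $\delta$ together with the inductive hypothesis. The key cancellation rests on the observation that both the scalars $q_{(i)}^{-mn}$ appearing in $\delta(f_{il})$ and the twist $q^{-(\text{deg}\,a_{2},\,\text{deg}\,b_{1})}$ defining the multiplication on $\mathscr{F} \otimes \mathscr{F}$ are symmetric under interchange of the two tensor factors; this symmetry is precisely what lets the induction determine the remaining values of the $\phi_{il}$ consistently. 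Once symmetry is in hand, property (c) follows from the mirror identity, completing the proof.
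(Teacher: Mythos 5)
The paper states Proposition~\ref{L-bilinear} with a citation to \cite{Lus10} and supplies no proof of its own, so there is no in-text argument to compare against; I can only assess your construction on its merits. Your realization of the form via the graded dual $\mathscr{F}^\vee$, with multiplication $(\xi\cdot\eta)(x) := (\xi\otimes\eta)(\delta(x))$, is the standard route, and your coassociativity computation on the generators $f_{il}$ is correct and extends to all of $\mathscr{F}$ for the reason you give.

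The gap is in the symmetry argument, which is the real content of the proposition. You assert that the twist $q^{-(\deg a_2,\,\deg b_1)}$ in the multiplication on $\mathscr{F}\otimes\mathscr{F}$ is ``symmetric under interchange of the two tensor factors,'' but it is not: under the flip $\sigma(a\otimes b)=b\otimes a$ the exponent $(\deg a_2,\deg b_1)$ becomes $(\deg a_1,\deg b_2)$, which differs in general. A concrete consequence is that $\delta$ is \emph{not} cocommutative even though $\sigma\circ\delta$ and $\delta$ agree on the generators. Indeed $\delta(f_{i1}f_{j1})$ contains $f_{i1}\otimes f_{j1}$ with coefficient $1$ but $f_{j1}\otimes f_{i1}$ with coefficient $q^{-(\alpha_i,\alpha_j)}$, so $\sigma\circ\delta\neq\delta$ whenever $(\alpha_i,\alpha_j)\neq 0$. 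Once this is noted, the induction you sketch does not close: writing $x=f_{jk}x'$ and applying the mirror identity gives $(x,y)_L = \sum (f_{jk},y_{(1)})_L\,(x',y_{(2)})_L$, and after invoking the inductive hypothesis this becomes $\sum (y_{(1)},f_{jk})_L\,(y_{(2)},x')_L = (\delta(y),\,f_{jk}\otimes x')_L$; but identifying that with $(y,x)_L$ is exactly property (c) applied to the pair $(y,\,f_{jk}x')$, which lives at the \emph{same} height you are currently handling, so the step is circular. Symmetry is nonetheless true, but a correct proof must establish property (c), the mirror identity, and symmetry by a single interlocking induction, and the cancellation it relies on comes from $\delta$ being an algebra homomorphism for the \emph{twisted} product on $\mathscr{F}\otimes\mathscr{F}$, not from any putative symmetry of the twist itself.
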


\vskip 3mm

From now on, we assume that 
\begin{equation} \label{eq:assumption}
\tau_{il} \in 1 + q \Z_{\ge 0} [[q]] \ \ \text{for all} \ (i,l) \in I^{\infty}.
\end{equation}

\vskip 3mm 

We define $\widehat{U}$ to be the $\Q(q)$-algebra with ${\mathbf 1}$ generated by the elements $q^{h}$ $(h \in P^{\vee})$ and $e_{il}$, $f_{il}$ $((i,l)\in I^{\infty})$ with the defining relations

\begin{equation}\label{eq:Uhat}
\begin{aligned}
& q^{0} = {\mathbf 1}, \ \ q^{h} \, q^{h'} = q^{h + h'} \ \ \text{for} \ h, h' \in P^{\vee}, \\
& q^{h} \, e_{jl} \, q^{-h} = q^{l \langle h, \alpha_{j}\rangle} e_{jl}, \ \ q^{h} \, f_{jl} \, q^{-h} = q^{-l \langle h, \alpha_{j}\rangle} f_{jl} \ \ \text{for} \ h \in P^{\vee}, (j,l) \in I^{\infty}, \\
& \sum_{k=0}^{1- l a_{ij}} (-1)^{k} \left[ \begin{matrix} 1- l a_{ij} \\ k\end{matrix} \right]_{i} \, e_{i}^{1- l a_{ij} -k} e_{jl} \, e_{i}^{k} =0,  \\
& \sum_{k=0}^{1- l a_{ij}} (-1)^{k} \left[ \begin{matrix} 1- l a_{ij} \\ k\end{matrix} \right]_{i} \, f_{i}^{1- l a_{ij} -k} f_{jl} \, f_{i}^{k} =0 \ \ \text{for} \ i \in I^{\text{re}}, \, i \neq (j,l) \in I^{\infty},\\
& \ \  e_{ik} \, e_{jl} - e_{jl} \, e_{ik} = 0, \ \,  f_{ik} \,f_{jl} - f_{jl} \,f_{ik} =0 \ \ \ \text{for} \, \ a_{ij}=0.
\end{aligned}
\end{equation}

In \cite{Bozec2014b}, Bozec showed that one can define an algebra homomorphism called the ({\it co-multiplication}) $\Delta: {\widehat U} \rightarrow {\widehat U} \otimes {\widehat U}$ given by 
\begin{equation} \label{eq:co-mult}
\begin{aligned}
& \Delta(q^{h}) = q^{h} \otimes q^{h}, \\
& \Delta(e_{il}) = \sum_{m+n=l} q_{(i)}^{mn} e_{im} \otimes K_{i}^{-m} e_{in}, \\
&  \Delta(f_{il}) = \sum_{m+n=l} q_{(i)}^{-mn} f_{im} K_{i}^{n} \otimes f_{in},
\end{aligned}
\end{equation}
where $K_{i}= q^{s_i h_i}$ $(i \in I)$.  
Furthermore, Bozec also showed that one can extend $(\ , \ )_{L}$ to a symmetric bilinaer form $(\ , \ )_{L}$ on $\widehat{U}$ satisfying 
\begin{equation}
\begin{aligned}
& (q^{h}, K_{j})_{L} = q^{-\langle h, \alpha_{j} \rangle}, \\
& (q^{h}, e_{il})_{L} = (q^{h}, f_{il})_{L} =0, \\
& (e_{ik}, e_{jl})_{L} = (f_{ik}, f_{jl})_{L} = \delta_{ij} \delta_{kl} \tau_{ik}.
\end{aligned}
\end{equation}

\vskip 2mm 

Define an involution $\omega : \widehat{U} \rightarrow \widehat{U}$ by 
\begin{equation} \label{eq:involution}
\omega(q^h) =q^{-h}, \ \ \omega(e_{il}) = f_{il}, \ \ \omega(f_{il}) = e_{il} \ \ \text{for} \ h \in P^{\vee}, \ (i,l)\in I^{\infty}. 
\end{equation}

\vskip 2mm 

For $x \in \widehat{U}$, following the Sweedler's notation \cite{Sweedler}, write
\begin{equation} \label{eq:Sweedler}
\Delta(x) = \sum x_{(1)} \otimes x_{(2)}. 
\end{equation}

\vskip 3mm 

\begin{definition} \label{def:qBB} \ 
{\rm Given a Borcherds-Cartan datum, the {\it quantum Borcherds-Bozec algebra} $U_{q}(\g)$  is defined to be the quotient algebra of $\widehat{U}$ by the defining relations 

\begin{equation} \label{eq:qBB}
\sum (a_{(1)}, b_{(2)})_{L} \, \omega(b_{(1)}) a_{(2)} = \sum (a_{(2)}, b_{(1)})_{L} \, a_{(1)} \omega(b_{(2)}) \ \ \text{for all} \ a, b \in \widehat{U}.
\end{equation}
}
\end{definition}

\vskip 8mm

\section{Quantum string algebras}

\vskip 2mm 

Recall that for all $(i,k), (j,l) \in I^{\infty}$, we have the co-multiplication formulas on $\widehat{U}$\,:
\begin{equation*}
\Delta(f_{ik}) = \sum_{m+n =k} q_{(i)}^{-mn} f_{im} K_{i}^{n} \otimes f_{in}, \ \ 
\Delta(f_{jl}) = \sum_{r+s=l} q_{(j)}^{-rs} f_{jr} K_{j}^{s} \otimes f_{js}.
\end{equation*}
Then the defining relation \eqref{eq:qBB} yields 
\begin{equation} \label{eq:qBB-1}
\begin{aligned}
& \sum_{\substack{m+n=k \\ r+s=l}} q_{(i)}^{-mn} q_{(j)}^{-rs} \, (f_{im} K_{i}^{n}, f_{js})_{L} \, e_{jr}K_{j}^{-s} f_{in} \\
& \qquad = \sum_{\substack{m+n=k \\ r+s=l}} q_{(i)}^{-mn} q_{(j)}^{-rs} \, (f_{in},  f_{jr} K_{j}^{s})_{L} \, f_{im} K_{i}^{n} e_{js}.
\end{aligned}
\end{equation}

\vskip 3mm 

Suppose  $i \neq j$. Then we have $(f_{im}K_{i}^{n}, f_{js})_{L} =0$ unless $m=0$, $s=0$, in which case, $n=k$, $r=l$. Hence the left-hand side of \eqref{eq:qBB-1} is equal to $e_{jl} f_{ik}$. Similarly, $(f_{in}, f_{jr}K_{j}^{s})_{L}=0$ implies $n=r=0$, $m=k$, $s=l$, and the right-hand side of \eqref{eq:qBB-1} is the same as $f_{ik}e_{jl}$. Hence we obtain
\begin{equation} \label{eq:commute}
e_{jl} f_{ik} = f_{ik} e_{jl} \quad \text{for all} \ \ i \neq j, \  k, l >0.
\end{equation}

\vskip 2mm 

Now we will deal with the case when $i = j$. For each $i \in I$, we define the {\it quantum $i$-string algebra} $U_{(i)}$ to be the subalgebra of $U_{q}(\g)$ generated by $e_{il}$, $f_{ik}$, $K_{i}^{\pm 1}$ $(k,l >0)$.  We denote by $U_{(i)}^{+}$ (resp. $U_{(i)}^{-}$) the subalgebra generated by $e_{il}$ (resp. $f_{il}$) for $(i,l) \in I^{\infty}$.  

\vskip 3mm 

We return to the relation \eqref{eq:qBB-1}. Since $i=j$, we have 
\begin{equation} \label{eq:qBB-2}
\begin{aligned}
& \sum_{\substack{m+n=k \\ r+s=l}} q_{(i)}^{-mn-rs} \, (f_{im} K_{i}^{n}, f_{is})_{L} \, e_{ir}K_{i}^{-s} f_{in} \\
& \qquad = \sum_{\substack{m+n=k \\ r+s=l}} q_{(i)}^{-mn-rs} \, (f_{in},  f_{ir} K_{i}^{s})_{L} \, f_{im} K_{i}^{n} e_{is}.
\end{aligned}
\end{equation}

\vskip 2mm 

Let us denote by $L$ and $R$ the left-hand side and right-hand side of \eqref{eq:qBB-2}, respectively. Since $(f_{im} K_{i}^{n}, f_{is})_{L}=0$ unless $m=s$, we have 
\begin{equation*}
L=\sum_{\substack{m+n=k \\ r+m=l}} q_{(i)}^{-m(n+r)} \, (f_{im} K_{i}^{n}, f_{im})_{L} \, e_{ir}K_{i}^{-m} f_{in}
=\sum_{\substack{m+n=k \\ r+m=l}} q_{(i)}^{-m(n+r)} \, \tau_{im} \, e_{ir}K_{i}^{-m} f_{in}.
\end{equation*}

\vskip 1mm

\noindent
Note that 
$$K_{i}^{-m} f_{in} = q_{i}^{mn\,a_{ii}}f_{in} K_{i}^{-m} = q_{(i)}^{2mn} f_{in} K_{i}^{-m}.$$

\vskip 1mm

\noindent
Hence we get
$$L = \sum_{\substack{m+n=k \\ r+m=l}} q_{(i)}^{m(n-r)} \tau_{im} e_{ir} f_{in} K_{i}^{-m}.$$

\vskip 2mm 

On the other hand, since $(f_{in}, f_{ir} K_{i}^{s})_{L} =0$ unless $n=r$ and $K_{i}^{n} e_{is} = q_{(i)}^{2ns} e_{is} K_{i}^{n}$, we have 
$$R = \sum_{\substack{m+n=k \\ n+s=l}} q_{(i)}^{n(s-m)} \tau_{in}  f_{im} e_{is} K_{i}^{n}.$$

\vskip 2mm 

By rearraning the indices in $L$, we obtain 

\vskip 3mm

\begin{lemma} \label{lem:string}  
{\rm For all $k, l >0$, we have the following relations in $U_{(i)}$:
\begin{equation} \label{eq:string}
\sum_{\substack{m+n=k \\ n+s=l}} q_{(i)}^{n(m-s)} \tau_{in} \, e_{is} \, f_{im} \, K_{i}^{-n}
= \sum_{\substack{m+n=k \\ n+s=l}} q_{(i)}^{-n(m-s)} \tau_{in} \, f_{im}\, e_{is} \, K_{i}^{n}. 
\end{equation}
}
\end{lemma}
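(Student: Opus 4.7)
The plan is to specialize the defining relation \eqref{eq:qBB} of $U_q(\g)$ to $a = f_{ik}$ and $b = f_{il}$ and then carry out a bookkeeping of the resulting sum. In the case $i=j$, this is precisely the relation \eqref{eq:qBB-2} already displayed, so what remains is to evaluate each side explicitly and rearrange.

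For the left-hand side $L$, I would use the orthogonality properties of $(\ ,\ )_L$ listed after \eqref{eq:co-mult}: the pairings $(q^h, f_{il})_L = 0$ and $(f_{ik}, f_{jl})_L = \delta_{ij}\delta_{kl}\tau_{ik}$ together force $(f_{im}K_i^n, f_{is})_L$ to vanish unless $m=s$, in which case it equals $\tau_{im}$. This collapses the four-index sum to triples $(m,n,r)$ subject to $m+n=k$ and $r+m=l$, producing a summand $q_{(i)}^{-m(n+r)}\tau_{im}\, e_{ir}K_i^{-m}f_{in}$. Next I would move $K_i^{-m}$ past $f_{in}$ using the Cartan relation $q^h f_{jl} q^{-h} = q^{-l\langle h,\alpha_j\rangle}f_{jl}$ from \eqref{eq:Uhat}, which specializes to $K_i^{-m} f_{in} = q_{(i)}^{2mn} f_{in}K_i^{-m}$; combining the two $q$-exponents via $-m(n+r)+2mn = m(n-r)$ yields
$$L \;=\; \sum_{\substack{m+n=k \\ r+m=l}} q_{(i)}^{m(n-r)}\,\tau_{im}\, e_{ir}\, f_{in}\, K_i^{-m}.$$

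An entirely parallel computation for the right-hand side $R$ uses $(f_{in}, f_{ir}K_i^s)_L = \delta_{nr}\tau_{in}$ to force $n=r$, and then the Cartan relation $K_i^n e_{is} = q_{(i)}^{2ns}e_{is}K_i^n$ to push the Cartan past the $e$, producing
$$R \;=\; \sum_{\substack{m+n=k \\ n+s=l}} q_{(i)}^{n(s-m)}\,\tau_{in}\, f_{im}\, e_{is}\, K_i^n.$$
This already matches the right-hand side of \eqref{eq:string}, since $q_{(i)}^{n(s-m)} = q_{(i)}^{-n(m-s)}$.

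To finish, I would rename the dummy indices in $L$ by swapping $m \leftrightarrow n$ and then renaming $r$ to $s$: the constraints $m+n=k$ and $r+m=l$ become $m+n=k$ and $n+s=l$, while the summand becomes $q_{(i)}^{n(m-s)}\tau_{in}\, e_{is}\, f_{im}\, K_i^{-n}$, which is exactly the left-hand side of \eqref{eq:string}. The only part of the argument that demands genuine care is tracking the exponents of $q_{(i)}$ through the two Cartan-commutation steps (one for $K_i^{-m}$ past $f_{in}$ on the $L$ side, one for $K_i^n$ past $e_{is}$ on the $R$ side); once these are combined correctly the identity $L=R$ reads as the claim, and no further structural input is needed beyond \eqref{eq:qBB}, the orthogonality of $(\ ,\ )_L$, and the Cartan commutation rules from \eqref{eq:Uhat}.
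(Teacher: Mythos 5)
Your proposal reproduces the paper's own proof essentially verbatim: specialize \eqref{eq:qBB} to $a=f_{ik}$, $b=f_{il}$ (i.e.\ start from \eqref{eq:qBB-2}), use the orthogonality $(f_{im}K_i^n,f_{is})_L=\delta_{ms}\tau_{im}$ and $(f_{in},f_{ir}K_i^s)_L=\delta_{nr}\tau_{in}$ to collapse the sums, push the Cartan elements through with $K_i^{\mp m}f_{in}=q_{(i)}^{\pm 2mn}f_{in}K_i^{\mp m}$ and $K_i^n e_{is}=q_{(i)}^{2ns}e_{is}K_i^n$, and finally relabel $m\leftrightarrow n$, $r\to s$ on the left side. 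The exponent bookkeeping $(-m(n+r)+2mn=m(n-r)$ on the left, $-n(m+s)+2ns=n(s-m)$ on the right$)$ is correct and matches the paper's intermediate expressions for $L$ and $R$, so the argument is complete and identical in substance.
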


\vskip 3mm

Let us analyze the implication of Lemma \ref{lem:string} in more detail. 

\vskip 3mm 

\begin{example} \label{ex:real} \hfill

\vskip 2mm 
{\rm
Suppose $a_{ii}=2$. 
In this case, $k=l=1$ and we have the following two cases:

\vskip 2mm  

\ \ (i) $m=0$, $n=1$, $s=0$

\ \ (ii) $m=1$, $n=0$, $s=1$.   

\vskip 2mm 

Therefore \eqref{eq:string} implies
$$\tau_{i,1} K_{i}^{-1} + \tau_{i,0} e_{i} f_{i} = \tau_{i,1} K_{i} + \tau_{i,0} f_{i} e_{i}.$$

\vskip 1mm 

\noindent
Take $\tau_{i,0}=1$, $\tau_{i,1} = \dfrac{1}{1-q_{i}^{2}}$ and replace $f_{i}$ by $F_{i} = - q_{i} f_{i}$. Then we obtain 
 $$e_{i} F_{i} - F_{i} e_{i} = \dfrac{K_{i} - K_{i}^{-1}}{q_{i} - q_{i}^{-1}}.$$
 Hence $U_{(i)} \cong U_{q}(sl_{2})$ and the set ${\mathbf B}=\{f_{i}^{n} \mid n\ge 0 \}$ is a basis of $U_{(i)}^{-}$. 
}
\end{example}

\vskip 3mm 

\begin{example} \label{ex:iso} \hfill

\vskip 1mm

{\rm Suppose $a_{ii}=0$. In this case, by the defining relations of quantum Borcherds-Bozec algebras, we have
\begin{equation*}
e_{ik} \, e_{il} = e_{il} \, e_{ik}, \ \ f_{ik}\, f_{il} = f_{il} \, f_{ik}, \ \
 K_{i}^{\pm 1}  \, e_{il} = e_{il} \, K_{i}^{\pm 1},  \ \ K_{i}^{\pm 1} \, f_{il} = f_{il} \, K_{i}^{\pm 1}. 
\end{equation*}
By Lemma \ref{lem:string}, the algebra $U_{(i)}$ has the additional relation
\begin{equation*} 
\sum_{\substack{m+n=k \\ n+s=l}} \tau_{in} \, e_{is} \, f_{im} \, K_{i}^{-n}
= \sum_{\substack{m+n=k \\ n+s=l}}  \tau_{in} \, f_{im}\, e_{is} \, K_{i}^{n}. 
\end{equation*}

\vskip 1mm

\noindent
We will call $U_{(i)}$ the {\it quantum twisted Heisenberg algebra}. 

\vskip 2mm 

For each $l>0$, let ${\mathbf c} = (c_{1},  c_{2},  \ldots, c_{l})$ be a {\it partition} of $l$ and define 
$f_{i, {\mathbf c}} := f_{i, c_{1}} f_{i, c_{2}} \cdots f_{i, c_{l}}$. Set ${\mathbf B}_{l}= \{ f_{i, {\mathbf c}} \mid \text{${\mathbf c}$ is a partition of $l$} \}$. Then ${\mathbf B}:= \bigcup_{l\ge 0} {\mathbf B}_{l}$  is a basis of $U_{(i)}$, where ${\mathbf B}_{0} = \{1\}$.  
}
\end{example}

\vskip 3mm 

\begin{example} \label{ex:non-iso} \hfill

\vskip 1mm 

{\rm Suppose $a_{ii} <0$. In this case, there are no relations other than \eqref{eq:string}. In particular, $U_{(i)}^{+}$ (resp. $U_{(i)}^{-})$ is the free associative algebra generated by $e_{ik}$ (resp. $f_{ik}$) for $k>0$. 

\vskip 2mm 

For each $l>0$, let ${\mathbf c} = (c_{1},  c_{2},  \ldots, c_{l})$ be a {\it composition} of $l$ and define 
$f_{i, {\mathbf c}} := f_{i, c_{1}} f_{i, c_{2}} \cdots f_{i, c_{l}}$. Set ${\mathbf B}_{l}= \{ f_{i, {\mathbf c}} \mid \text{${\mathbf c}$ is a composition of $l$} \}$. Then ${\mathbf B}:= \bigcup_{l\ge 0} {\mathbf B}_{l}$  is a basis of $U_{(i)}^{-}$, where ${\mathbf B}_{0} = \{1\}$.  
}
\end{example}

\vskip 3mm

\vskip 2mm

For simplicity, we will often write $U$ for the quantum Borcherds-Bozec algebra $U_{q}(\g)$. 
We now prove that  $\Delta$ passes down to the co-multiplication on $U$. 

\vskip 3mm 

\begin{proposition} \label{prop:co-mult}
{\rm 
The co-multiplication $\Delta$ on $\widehat{U}$ defines an algebra homomorphism
\begin{equation}
\Delta: U \longrightarrow U \otimes U.
\end{equation}
}
\end{proposition}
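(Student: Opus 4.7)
The plan is as follows. Since $\Delta$ is already a $\Q(q)$-algebra homomorphism $\widehat{U}\to\widehat{U}\otimes\widehat{U}$, in order to pass it to $U$ I need to check that the two-sided ideal $J\subset\widehat{U}$ generated by the Drinfeld-type relations \eqref{eq:qBB} is mapped into $J\otimes\widehat{U}+\widehat{U}\otimes J$. Equivalently, the relation \eqref{eq:qBB} should hold in $U\otimes U$ after applying $\Delta$.

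The analysis at the start of Section~2 has already reduced the content of \eqref{eq:qBB} (applied to $a=f_{ik}$, $b=f_{jl}$) to two families of new identities: the commutation \eqref{eq:commute} when $i\neq j$ and the string-algebra identity \eqref{eq:string} when $i=j$. Applying $\omega\otimes\omega$ to \eqref{eq:qBB} produces the symmetric identities in the $e$-generators, and for mixed choices of $a,b$ the relation is either vacuous (because $(q^h,e_{il})_L=(q^h,f_{il})_L=0$) or a consequence of the relations already imposed in $\widehat{U}$. Thus it is enough to verify that $\Delta$ preserves \eqref{eq:commute} and \eqref{eq:string} modulo the corresponding ideal of $U\otimes U$.

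For the $i\neq j$ case, I would expand
\begin{equation*}
\Delta(e_{jl})\Delta(f_{ik})=\sum_{\substack{r+s=l\\ m+n=k}} q_{(j)}^{rs} q_{(i)}^{-mn}\,(e_{jr}\otimes K_j^{-r}e_{js})(f_{im}K_i^n\otimes f_{in})
\end{equation*}
and the symmetric expansion of $\Delta(f_{ik})\Delta(e_{jl})$, and compare term by term. Inside each tensor slot, \eqref{eq:commute} itself permits interchanging $e_j$'s with $f_i$'s; the resulting commutations of $K_i^n$ past $e_{jr}$ and of $K_j^{-r}$ past $f_{in}$ produce reciprocal powers of $q^{s_i a_{ij}}$ that cancel, so the two expansions agree in $U\otimes U$.

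For the $i=j$ case, I would apply $\Delta$ to both sides of \eqref{eq:string}, using $\Delta(K_i^{\pm n})=K_i^{\pm n}\otimes K_i^{\pm n}$ together with the co-product formulas for $e_{is}$ and $f_{im}$. Each side unfolds into a triply-indexed sum, and the strategy is to reorganize these sums so that, inside each tensor slot, the inner partial sum is visibly an instance of \eqref{eq:string} with smaller $(k,l)$-parameters. This reorganization is the main obstacle: the $q_{(i)}$-exponents coming from the $K_i$-commutations, those already present in \eqref{eq:string}, and those produced by the two co-products have to combine compatibly, and the $\tau_{i,\bullet}$-weights in the two tensor slots must factor in exactly the right index pattern. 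Once the matching is made explicit, invoking \eqref{eq:string} in each tensor slot of $U\otimes U$ yields the required equality, so $\Delta$ descends to $U$.
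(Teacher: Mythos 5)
Your reduction is the right one and matches the paper's framing: $\Delta$ descends to $U$ provided it maps the Drinfeld ideal into $J\otimes\widehat U + \widehat U\otimes J$, and since that ideal is generated by the relations \eqref{eq:commute} and \eqref{eq:string} obtained by plugging generators $a=f_{ik}$, $b=f_{jl}$ into \eqref{eq:qBB}, it suffices to check those two families. Your treatment of the $i\neq j$ case (expand, commute the $K$'s past the opposite-type generators, use symmetrizability $q_i^{a_{ij}}=q_j^{a_{ji}}$ to cancel the resulting powers of $q$) is essentially identical to the paper's.

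The $i=j$ case is where you stop short. You propose to expand $\Delta$ of both sides of \eqref{eq:string} into a triply-indexed sum and then "reorganize" so that the inner partial sums become instances of \eqref{eq:string} with smaller $(k,l)$. You yourself flag this reorganization as "the main obstacle," and you do not carry it out; stating that the $q_{(i)}$-exponents and $\tau$-weights "have to combine compatibly" is a hope, not an argument, and the combinatorics of such a direct matching are delicate (already for $k=l=2$ one is comparing $9+9$ terms against $9+9$, sorted into tensor bi-degrees). The paper sidesteps exactly this difficulty by a different inductive device: it lets $A_0$ be the relation \eqref{eq:string} at level $k-1$ (so $\Delta(A_0)=0$ by induction), right-multiplies by $K_i^{-1}+K_i$ to produce an identity $A_0(K_i^{-1}+K_i)=B+C=0$ in $\widehat U$ in which $B$ is precisely the "tail" of the level-$k$ relation and $C=-B=\tau_{i0}(e_{ik}f_{ik}-f_{ik}e_{ik})$, and then concludes $\Delta(\tau_{i0}(e_{ik}f_{ik}-f_{ik}e_{ik})+B)=\Delta(C)+\Delta(B)=\Delta(A_0)\Delta(K_i^{-1}+K_i)=0$. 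Only the base cases $k=l=1$ and $k=1<l$ are checked by the kind of direct expansion-and-matching you envision. So your plan for $i=j$ is not a proof as written; you should either supply the explicit bookkeeping that makes the two triply-indexed sums match slot-by-slot, or, more efficiently, adopt the $(K_i^{-1}+K_i)$-multiplication trick, which reduces the inductive step to the purely algebraic identity $A_0(K_i^{-1}+K_i)=B+C$ plus the fact that $\Delta$ is a homomorphism on $\widehat U$.
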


\begin{proof} \, We need to prove $\Delta$ preserves the defining relations of $U$. 

\vskip 2mm 

When $i \neq j$, we have already seen that 
$$f_{ik}\, e_{jl} = e_{jl} \, f_{ik} \quad \text{for all} \ \ k, l>0.$$
Recall that 
$$\Delta(f_{ik}) = \sum_{m+n=k} q_{(i)}^{-mn} \, f_{im} K_{i}^n \otimes f_{in}, \quad 
\Delta(e_{jl}) = \sum_{r+s=l} q_{(j)}^{rs} \, e_{jr} \otimes K_{j}^{-s} e_{js}.$$
Hence
\begin{equation*}
\begin{aligned}
\Delta(f_{ik})\, \Delta(e_{jl}) & = \sum_{\substack{m+n=k \\ n+s=l}} (q_{(i)}^{-mn} \, f_{im} K_{i}^{n} \otimes f_{in} ) \, (q_{(j)}^{rs} e_{jr} \otimes K_{j}^{-r} e_{js})  \\
&= \sum_{\substack{m+n=k \\ n+s=l}} q_{(i)} ^{-mn} q_{(j)}^{rs}\,  (f_{im} K_{i}^{n} e_{jr} \otimes f_{in} K_{j}^{-r} e_{js}),
\end{aligned}
\end{equation*}
which coincides with the expression in the triangular decomposition of $U$. 

\vskip 2mm

On the other hand,
\begin{equation*}
\begin{aligned}
\Delta(e_{jl})\, \Delta(f_{ik}) & = \sum_{\substack{m+n=k \\ n+s=l}} (q_{(j)}^{rs}\,  e_{jr}  \otimes K_{j}^{-r} e_{js}) \, (q_{(i)}^{-mn} f_{im} K_{i}^{n} \otimes f_{in})  \\
&= \sum_{\substack{m+n=k \\ n+s=l}} q_{(i)} ^{-mn} q_{(j)}^{rs}\, (e_{jr} f_{im} K_{i}^{n}  \otimes K_{j}^{-r} e_{js} f_{in})\\
&=\sum_{\substack{m+n=k \\ n+s=l}} q_{(i)} ^{-mn} q_{(j)}^{rs}\, (f_{im}e_{jr} K_{i}^{n}  \otimes K_{j}^{-r} f_{in} e_{js}).
\end{aligned}
\end{equation*}

We need to change the order of the expression $e_{jr} K_{i}^{n}$ and $K_{j}^{-r} f_{in}$. Note that 
$$e_{jr} K_{i}^{n} = q_{i}^{-rna_{_ij}} K_{i}^{n} e_{jr}, \quad K_{i}^{-r} f_{in} = q^{rna_{ji}} f_{in} K_{j}^{-r}.$$
Since $A$ is symmetrizable, we have $q_{i}^{a_{ij}}=q^{s_{i} a_{ij}} = q^{s_{j} a_{ji}} =q_{j}^{a_{ji}}$, which implies 
\begin{equation*}
\begin{aligned}
\Delta(e_{jl})\, \Delta(f_{ik}) &=\sum_{\substack{m+n=k \\ n+s=l}} q_{(i)} ^{-mn} q_{(j)}^{rs} 
q_{i}^{-rna_{ij}} q_{j}^{rna_{ji}} \, (f_{im} K_{i}^{n} e_{jr}   \otimes f_{in} K_{j}^{-r} e_{js}) \\
& = \sum_{\substack{m+n=k \\ n+s=l}} 
q_{(i)}^{-mn} q_{(j)}^{rs} \, (f_{im} K_{i}^{n} e_{jr}  \otimes f_{in} K_{j}^{-r} e_{js} ).
\end{aligned}
\end{equation*}
Hence we have 
$$\Delta(f_{ik}) \Delta(e_{jl}) = \Delta(e_{jl}) \Delta(f_{ik}),$$
as desired. 

\vskip 3mm 

We will move to the case when $i=j$. 

\vskip 3mm 

{\bf Case 1\,:} Supose $k=l$. 

\vskip 2mm 

In this case, by Lemma \ref{lem:string}, for all $k>0$, we have 
\begin{equation}\label{eq:delta}
\sum_{n=0}^{k} \tau_{in} (e_{i, k-n}\, f_{i, k-n} \, K_{i}^{-n}  - f_{i, k-n}\, e_{i, k-n}\, K_{i}^{n})=0. 
\end{equation}
We would like to prove 
\begin{equation} \label{eq:delta1}
\Delta\left(\sum_{n=0}^{k} \tau_{in} (e_{i, k-n}\, f_{i, k-n} \, K_{i}^{-n}  - f_{i, k-n}\, e_{i, k-n}\, K_{i}^{n}) \right)=0. 
\end{equation}

\vskip 2mm 

We will use induction on $k$. 

\vskip 2mm

If $k=l=1$, we have 
$$\tau_{i0}(e_{i1}f_{i1} -f_{i1}e_{i1}) + \tau_{i1}(K_{i}^{-1} - K_{i})=0.$$

Hence \eqref{eq:co-mult} gives
\begin{equation*}
\begin{aligned}
& \Delta (e_{i1}) \Delta(f_{i1})=(e_{i1} \otimes K_{i}^{-1} + 1 \otimes e_{i1}) \, (f_{i1} \otimes 1 + K_{i} \otimes f_{i1}) \\
& \ \ =e_{i1}f_{i1} \otimes K_{i}^{-1} + q_{(i)}^{2} \, e_{i1} K_{i} \otimes f_{i1} K_{i}^{-1} + f_{i1} \otimes e_{i1} + K_{i} \otimes e_{i1} f_{i1},\\
& \Delta(f_{i1}) \Delta(e_{i1}) = (f_{i1} \otimes 1+ K_{i} \otimes f_{i1}) \, (e_{i1}\otimes K_{i}^{-1} + 1 \otimes e_{i1}) \\
&\ \ =f_{i1} e_{i1} \otimes K_{i}^{-1} + f_{i1} \otimes e_{i1} + q_{(i)}^{2}\, e_{i1} K_{i} \otimes f_{i1} K_{i}^{-1} + K_{i} \otimes f_{i1} e_{i1}. 
\end{aligned}
\end{equation*}

\vskip 2mm 

Therefore we obtain 
\begin{equation*}
\begin{aligned}
& \Delta \left(\tau_{i0}(e_{i1} f_{i1} - f_{i1} e_{i1}) + \tau_{i1} (K_{i}^{-1} - K_{i})   \right) \\
& \ \ = \tau_{i0}(e_{i1} f_{i1} - f_{I1} e_{i1}) \otimes K_{i}^{-1} + \tau_{i0}(K_{i} \otimes (e_{i1}f_{i1} - f_{i1} e_{i1})\\ 
& \ \ \ \ \ + \tau_{i1} (K_{i}^{-1} \otimes K_{i}^{-1} - K_{i} \otimes K_{i})  \\
& \ \  = \tau_{i1}(K_{i} - K_{i}^{-1}) \otimes K_{i}^{-1} + K_{i} \otimes \tau_{i1} (K_{i}-K_{i}^{-1})\\
& \ \ \ \ \ + \tau_{i1}(K_{i}^{-1} \otimes K_{i}^{-1} - K_{i} \otimes K_{i})=0.
\end{aligned}
\end{equation*}

\vskip 2mm 

Suppose $k>1$ and set 
\begin{equation} \label{eq:A0}
\begin{aligned}
A_{0} &= \tau_{i1}(e_{i,k-1} f_{i, k-1} -f_{i, k-1} e_{i,k-1}) \\
&\phantom{=} + \tau_{i2} (e_{i, k-2} f_{i, k-2} K_{i}^{-1} - f_{i, k-2} e_{i, k-2} K_{i}) \\
& \phantom{=}+\cdots + \tau_{ik}(K_{i}^{-k+1} - K_{i}^{k-1}).
\end{aligned}
\end{equation} 

By Lemma \ref{lem:string}, $A_{0}=0$. Multiply $A_{0}$ by $K_{i}^{-1} + K_{i}$. Then we obtain 
\begin{equation} \label{eq:A}
A:=A_{0} (K_{i}^{-1} + K_{i}) = B+C=0,
\end{equation}
where 
\begin{equation} \label{eq:B}
\begin{aligned}
B &= \tau_{i1}(e_{i,k-1} f_{i, k-1}K_{i}^{-1}  -f_{i, k-1} e_{i,k-1} K_{i}) \\ 
&\phantom{=} + \tau_{i2} (e_{i, k-2} f_{i, k-2} K_{i}^{-2} - f_{i, k-2} e_{i, k-2} K_{i}^2) \\
& \phantom{=}+\cdots + \tau_{ik}(K_{i}^{-k} - K_{i}^{k}), 
\end{aligned}
\end{equation} 

\begin{equation} \label{eq:C}
\begin{aligned}
C &= \tau_{i1}(e_{i,k-1} f_{i, k-1}K_{i}  -f_{i, k-1} e_{i,k-1} K_{i}^{-1}) \\
& \phantom{=}+ \tau_{i2} (e_{i, k-2} f_{i, k-2}  - f_{i, k-2} e_{i, k-2}  \\
&\phantom{=} +\cdots + \tau_{ik}(K_{i}^{-k+2} - K_{i}^{k-2}). 
\end{aligned}
\end{equation} 

\vskip 2mm 

The induction hypothesis gives $\Delta(A_{0})=0$. Since $\Delta$ is an algebra homomorphism on $\widehat{U}$, we get 
\begin{equation} \label{eq:D}
\begin{aligned}
0&= \Delta(A_{0}) \Delta(K_{i}^{-1} + K_{i}) = \Delta(A_{0}(K_{i}^{-1}+K_{i})) \\
&=\Delta(A) = \Delta(B+C)=\Delta(B)+\Delta(C). 
\end{aligned}
\end{equation}

The relation \eqref{eq:delta} gives 
\begin{equation*}
\tau_{i0}(e_{ik} f_{ik} - f_{ik} e_{ik}) +B=0
\end{equation*}
and we would like to prove
\begin{equation*}
\Delta(\tau_{i0}(e_{ik}f_{ik} - f_{ik} e_{ik}) + B)
=\Delta(\tau_{i0}(e_{ik}f_{ik} - f_{ik} e_{ik})) + \Delta(B)=0.
\end{equation*}

\vskip 2mm 

Since $B+C=0$, we have 
$$\tau_{i0}(e_{ik} f_{ik} - f_{ik} e_{ik}) = -B =C,$$
which implies
$$\Delta(\tau_{i0}(e_{ik}f_{ik} - f_{ik} e_{ik})) = \Delta(C).$$

\vskip 2mm

Therefore, by \eqref{eq:D}, we obtain 
\begin{equation*}
\begin{aligned}
\Delta(\tau_{i0}(e_{ik}f_{ik} & - f_{ik} e_{ik}) + B)
=\Delta(\tau_{i0}(e_{ik}f_{ik} - f_{ik} e_{ik})) + \Delta(B)\\
&=\Delta(C)+\Delta(B)=0.
\end{aligned}
\end{equation*}

\vskip 2mm

{\bf Case 2\,:}  Suppose $k<l$.

\vskip 3mm 

In this case, by Lemma \ref{lem:string}, for all $k>0$, we have 
\begin{equation}\label{eq:delta-0}
\sum_{n=0}^{k} \tau_{in} \left(q_{(i)}^{-n(l-k)} e_{i, l-n}\, f_{i, k-n} \, K_{i}^{-n}  - q_{(i)}^{n(l-k)} f_{i, k-n}\, e_{i, l-n}\, K_{i}^{n}\right)=0. 
\end{equation}
We need to prove 
\begin{equation} \label{eq:delta-1}
\Delta\left(\sum_{n=0}^{k} \tau_{in} (q_{(i)}^{-n(l-k)}e_{i, k-n}\, f_{i, k-n} \, K_{i}^{-n}  -
q_{(i)}^{n(l-k)} f_{i, k-n}\, e_{i, k-n}\, K_{i}^{n}) \right)=0. 
\end{equation}

\vskip 2mm 

We will use induction on $k$. 

\vskip 2mm 

If $k=1$, $l>1$, we have
\begin{equation} \label{eq:induction}
\tau_{i0}(e_{il}f_{i1} -f_{i1}e_{il}) + \tau_{i1} e_{i, l-1}\,(q_{(i)}^{-l+1} K_{i}^{-1} -q_{(i)}^{l-1} K_{i})=0.
\end{equation}

We will verify
$$\Delta \left(\tau_{i0}(e_{il}f_{i1} -f_{i1}e_{il}) + \tau_{i1} e_{i, l-1}\,(q_{(i)}^{-l+1} K_{i}^{-1} -q_{(i)}^{l-1} K_{i})\right)=0$$
by a direct calculation. 

\vskip 3mm 

Recall that 
\begin{equation*}
\begin{aligned}
\Delta(e_{il})&=e_{il} \otimes K_{i}^{-l} + q_{(i)}^{l-1} e_{i, l-1} \otimes K_{i}^{-l+1} e_{i1} + + q_{)i)}^{2(l-2)} e_{i,l-2} \otimes K_{i}^{-l+2} e_{i,2}  \\ 
& \ \  + \cdots + q_{(i)}^{2(l-2)} e_{i2} \otimes K_{i}^{-2} e_{i, l-2} + q_{(i)}^{l-1} e_{i1} \otimes K_{i}^{-1} e_{i,l-1} + 1 \otimes e_{il},\\
\Delta(f_{i1})& = f_{i1} \otimes 1 + K_{i} \otimes f_{i1}.
\end{aligned}
\end{equation*}

\vskip 2mm 

Thus we have 
\begin{equation*}
\begin{aligned}
\tau_{i0} & \left(\Delta(e_{il} )\Delta(f_{i1})  -\Delta(f_{i1}) \Delta(e_{il}) \right) \\
& \ \  =\tau_{i0} (e_{il} f_{i1} - f_{i1} e_{il}) \otimes K_{i}^{-l} \\
& \ \ \phantom{=}+ q_{(i)}^{l-1} \, \tau_{i0} (e_{i,l-1}f_{i1}-f_{i1} e_{i,l-1}) \otimes K_{i}^{-l+1} e_{i,1}  \\
& \ \ \phantom{=}+ \cdots + q_{(i)}^{l-1} \tau_{i0} (e_{i1}f_{i1} - f_{i1} e_{i1} ) \otimes K_{i}^{-l} e_{i, l-1}\\
& \ \ \phantom{=}+ q_{(i)}^{l-1} e_{i,l-1} K_{i} \otimes \tau_{i0} (e_{i1} f_{i1} - f_{i1} e_{i1}) K_{i}^{-l+1} \\
& \ \ \phantom{=}+ e_{i,l-2} K_{i} \otimes \tau_{i0} (e_{i2} f_{i1} - f_{i1} e_{i2}) K_{-l+2}\\
& \ \ \phantom{=}+ \cdots + q_{(i)}^{-(l-3)} e_{i} K_{i} \otimes \tau_{i0} (e_{i,l-1} f_{i1} - f_{i1} e_{i, l-1}) K_{i}^{-1} \\
& \ \ \phantom{=}+ K_{i} \otimes \tau_{i0} (e_{il} f_{i1} - f_{i1} e_{il}). 
\end{aligned}
\end{equation*}

\vskip 2mm 

Using the relation \eqref{eq:induction}, we get 
\begin{equation}
\begin{aligned} \label{eq:S}
\tau_{i0}  & \left(\Delta(e_{il} )\Delta(f_{i1})  -\Delta(f_{i1}) \Delta(e_{il}) \right) \\
 =& \tau_{i1} (q_{(i)}^{l-1} e_{i,l-1} K_{i} \otimes K_{i}^{-l+2}  
 + q_{(i)} e_{i,l-2} K_{i} \otimes e_{i1} K_{i}^{-l+3} \\
& + \cdots + q_{(i)} e_{i,1} K_{i} \otimes e_{i,l-2} + q_{(i)}^{l-1} K_{i} \otimes e_{i,l-1} K_{i})\\ 
& -  \tau_{i1} (q_{(i)}^{-l+1} e_{i,l-1} K_{i}^{-1} \otimes K_{i}^{-l} + q_{(i)}^{-2l+3} e_{i,l-2} K_{i}^{-1} 
 \otimes e_{i1} K_{i}^{-l+1} \\
 & + \cdots q_{(i)}^{-2l+3} e_{I1} K_{i}^{-1} \otimes e_{i,l-2} K_{i}^{-2} 
 +q_{(i)}^{-l+1} K_{i}^{-1} \otimes e_{i,l-1} K_{i}^{-1}).
\end{aligned}
\end{equation}

\vskip 3mm 

Now we consider the co-multiplication of $e_{i,l-1}(q_{(i)}^{-l+1} K_{i}^{-1} - q_{(i)}^{l-1} K_{i})$. 
Note that 
$$\Delta(q_{(i)}^{-l+1} K_{i}^{-1} - q_{(i)}^{l-1} K_{i}) = q_{(i)}^{-l+1}(K_{i}^{-1} \otimes K_{i}^{-1})
- q_{(i)}^{l-1}(K_{i} \otimes K_{i}).$$

\vskip 2mm 

Hence we have 

\begin{equation} \label{eq:T}
\begin{aligned}
\Delta(\tau_{1} & \big (e_{i,l-1}(q_{(i)}^{-l+1} K_{i}^{-1} - q_{(i)}^{l-1} K_{i}))) \\
 =&  \tau_{i1} (q_{(i)}^{-l+1} e_{i,l-1} K_{i}^{-1} \otimes K_{i}^{-l} 
+ q_{(i)}^{-2l+3} e_{i,l-2} K_{i}^{-1} \otimes e_{i,1} K_{i}^{-l+1} \\
& + \cdots + q_{(i)}^{-2l+3} e_{i1} K_{i}^{-1} \otimes e_{i,l-2} K_{i}^{-2} 
+ q_{(i)}^{-l+1} K_{i}^{-1} \otimes e_{i,l-1} K_{i}^{-1} \big ) \\
& - \tau_{i1} \big (q_{(i)}^{l-1} e_{i,l-1} K_{i} \otimes K_{i}^{-l+2} 
+ q_{(i)} e_{i,l-2} K_{i} \otimes e_{i1} K_{i}^{-l+3}\\
& + \cdots + q_{(i)} e_{i1} K_{i} \otimes e_{i,l-2} 
+ q_{(i)}^{l-1} K_{i} \otimes e_{i,l-1} K_{i}\big ).
\end{aligned}
\end{equation}

Therefore, combining \eqref{eq:S} and \eqref{eq:T}.we obtain 
$$\Delta \left(\tau_{i0}(e_{il}f_{i1} -f_{i1}e_{il}) 
+ \tau_{i1} \, e_{i, l-1}\,(q_{(i)}^{-l+1} K_{i}^{-1} -q_{(i)}^{l-1} K_{i})\right)=0.$$

\vskip 2mm 

Assume that $k>1$. Our argument is similar to the case when $k=l$. We already know
\begin{equation*}
\begin{aligned}
\tau_{i0} & \left(e_{il} f_{ik} - f_{ik} e_{il} \right) + \tau_{i1} \left(q_{(i)}^{-(l-k)} e_{i, l-1} \, f_{i, k-1} K_{i}^{-1}
- q_{(i)}^{l-k} f_{i,k-1} \, e_{i, l-1} K_{i}\right) \\
& +  \tau_{i2} \left(q_{(i)}^{-2(l-k)} e_{i,l-2} \,f_{i, k-2} K_{i}^{-2} - q_{(i)}^{2(l-k)} f_{i,k-2} \,e_{i,l-2} K_{i}^2 \right) \\
& + \cdots + \tau_{ik}\left(q_{(i)}^{-k(l-k)} e_{i,l-k} K_{i}^{-1} - q_{(i)}^{k(l-k)} e_{i,l-k} K_{i}^{k}\right) =0,
\end{aligned} 
\end{equation*}
and we need to show $\Delta$ preserves the above relation. 

\vskip 2mm

Set 
\begin{equation*}
\begin{aligned}
A_{0} & = \tau_{i1} \left( e_{i, l-1}\, f_{i, k-1} 
-  f_{i,k-1} \, e_{i, l-1} \right) \\
&\phantom{=} +  \tau_{i2} \left(q_{(i)}^{-(l-k)} e_{i,l-2} \, f_{i, k-2} K_{i}^{-1} - q_{(i)}^{l-k} f_{i,k-2} \, e_{i,l-2} K_{i} \right) \\
& \phantom{=} + \cdots + \tau_{ik}\left(q_{(i)}^{-(k-1)(l-k)} e_{i,l-k} K_{i}^{-1} - q_{(i)}^{(k-1)(l-k)} e_{i,l-k} K_{i}^{k}\right),
\end{aligned} 
\end{equation*}
which is equal to $0$.

\vskip 2mm 

Multiply $A_{0}$ by $q_{(i)}^{-(l-k)} K_{i}^{-1} + q_{(i)}^{l-k} K_{i}$ to obtain 
$$A:=A_{0}\left(q_{(i)}^{-(l-k)} K_{i}^{-1} + q_{(i)}^{l-k} K_{i}\right)=B+C=0,$$
where 
\begin{equation*}
\begin{aligned}
B&=\tau_{i1} \left(q_{(i)}^{-(l-k)} e_{i, l-1} \, f_{i, k-1} K_{i}^{-1}
- q_{(i)}^{l-k} f_{i,k-1}\,  e_{i, l-1} K_{i}\right) \\
&\phantom{=} +  \tau_{i2} \left(q_{(i)}^{-2(l-k)} e_{i,l-2} \, f_{i, k-2} K_{i}^{-2} - q_{(i)}^{2(l-k)} f_{i,k-2} \, e_{i,l-2} K_{i}^2 \right) \\
&\phantom{=} + \cdots + \tau_{ik}\left(q_{(i)}^{-k(l-k)} e_{i,l-k} K_{i}^{-1} - q_{(i)}^{k(l-k)} e_{i,l-k} K_{i}^{k}\right),
\end{aligned}
\end{equation*}

\begin{equation*}
\begin{aligned}
C &=\tau_{i1} \left(q_{(i)}^{l-k} e_{i, l-1} \, f_{i, k-1} K_{i}
- q_{(i)}^{-(l-k)} f_{i,k-1} \, e_{i, l-1} K_{i}^{-1} \right) \\
& \phantom{=}+  \tau_{i2} \left(e_{i,l-2}\, f_{i, k-2} -  f_{i,k-2} \, e_{i,l-2}  \right) \\
& \phantom{=}+ \cdots + \tau_{ik}\left(q_{(i)}^{-(k-2)(l-k)} e_{i,l-k} K_{i}^{-1} - q_{(i)}^{(k-2)(l-k)} e_{i,l-k} K_{i}^{k}\right).
\end{aligned}
\end{equation*}

As in the case of $k=l$, since $B+C=0$, we have 
$$\tau_{i0}(e_{il} f_{ik} - f_{ik} e_{il})=C$$
and 
$$\Delta(\tau_{i0}(e_{il} f_{ik} - f_{ik} e_{il}))=\Delta(C).$$

\vskip 2mm 

By the induction hypothesis, we have 
\begin{equation*}
\begin{aligned}
\Delta(A) &= \Delta(B)  +  \Delta(C)=\Delta(B+C) =\Delta \left(A_{0}(q_{(i)}^{-(l-k)} K_{-1} + q_{(i)}^{l-k} K_{i}\right) \\
& = \Delta(A_{0}) \Delta\left(q_{(i)}^{-(l-k)} K_{-1} + q_{(i)}^{l-k} K_{i}\right) =0.
\end{aligned}
\end{equation*}

Therefore, 
\begin{equation*}
\Delta\left(\tau_{i0}(e_{il} f_{ik} - f_{ik} e_{il}) + B\right)  
= \Delta\left(\tau_{i0}(e_{il} f_{ik} - f_{ik} e_{il}) \right) + \Delta(B) = \Delta(C)+\Delta(B)=0,
\end{equation*}
which proves our claim. 

\vskip 3mm 

{\bf Case 3\,:} If $k>l$, we can prove our claim almost in the same way as we did above.  

\vskip 2mm

Therefore, we obtain a co-multiplication on $U$
$$\Delta : U \longrightarrow U \otimes U$$
as desired. 
\end{proof}

\vskip 8mm

\section{Triangular decomposition}

\vskip 2mm

Let $U^{+}$ (resp. $U^{-}$) be the subalgebra of $U_{q}(\g)$ generated by $e_{il}$ (resp. $f_{il}$) for $(i,l) \in I^{\infty}$. We will denote by $U^{0}$ the subalgebra generated by $q^{h}$ $(h \in P^{\vee})$. It is easy to see that $U^{0} = \bigoplus_{h \in P^{\vee}} \Q(q) q^{h}$. We will prove that the quantum Borcherds-Bozec algebra has a {\it triangular decomposition}.

\vskip 3mm 

We first prove the following lemma.

\begin{lemma} \label{lem:triangular} 
{\rm Let $U^{\ge 0}$ (resp. $U^{\le 0}$) be the subalgebra of 
$U_{q}(\g)$ generated by $U^{0}$ and $U^{+}$ (resp. $U^{-}$ and $U^{0}$). 
Then we have the isomorphisms: 
\begin{equation}
 U^{\le 0}  \cong U^{-} \otimes U^{0}, \qquad
U^{\ge 0} \cong  U^{0} \otimes U^{+}.
\end{equation}
}
\end{lemma}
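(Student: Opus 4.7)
The plan is to prove $U^{\le 0} \cong U^- \otimes U^0$; the second isomorphism $U^{\ge 0} \cong U^0 \otimes U^+$ then follows by applying the involution $\omega$ of \eqref{eq:involution}. Let $\mu \colon U^- \otimes U^0 \to U^{\le 0}$, $x \otimes q^h \mapsto x q^h$, denote the multiplication map. Surjectivity of $\mu$ is immediate from the commutation $q^h f_{jl} = q^{-l\langle h,\alpha_j\rangle} f_{jl} q^h$ from \eqref{eq:Uhat}, which descends to $U$ and pushes every $q^h$ to the right in any monomial in $\{q^h, f_{il}\}$.

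For injectivity I would use the $Q$-grading of $U$. Every defining relation in \eqref{eq:Uhat} and \eqref{eq:qBB} is homogeneous for the adjoint action of $U^0$, so $U = \bigoplus_{\alpha \in Q} U_\alpha$ with $U_\alpha = \{u : q^h u q^{-h} = q^{\langle h,\alpha\rangle} u\}$; in particular $U^- = \bigoplus_{\beta \in Q_+} U^-_{-\beta}$. Given a hypothetical relation $\sum_{j=1}^n x_j q^{h_j} = 0$ with the $h_j \in P^\vee$ distinct, projection to a single weight reduces to $x_1,\dots,x_n \in U^-_{-\beta}$ for some fixed $\beta = \sum_i c_i \alpha_i \in Q_+$; set $K_\beta := \prod_i K_i^{c_i} \in U^0$.

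I would then apply the co-multiplication $\Delta \colon U \to U \otimes U$ from Proposition \ref{prop:co-mult}. Because $\Delta(q^h) = q^h \otimes q^h$, $\Delta$ is graded for the induced $Q \times Q$-grading on $U \otimes U$, so one may extract a single bigraded component. Using \eqref{eq:co-mult} and a short induction on the monomial length of $x$ in the $f_{il}$'s --- at each step only the summand $K_i^l \otimes f_{il}$ in $\Delta(f_{il}) = \sum_{m+n=l} q_{(i)}^{-mn} f_{im} K_i^n \otimes f_{in}$ contributes to the maximal weight in the second tensor factor --- one obtains the leading-term formula: for every $x \in U^-_{-\beta}$, the component of $\Delta(x)$ in $U_0 \otimes U_{-\beta}$ equals $K_\beta \otimes x$. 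Multiplication by $\Delta(q^{h_j}) = q^{h_j} \otimes q^{h_j}$ (bigraded $(0,0)$) preserves bigradings, so the $U_0 \otimes U_{-\beta}$ component of the identity $0 = \Delta\bigl(\sum_j x_j q^{h_j}\bigr)$ reads
\[
  \sum_{j=1}^n K_\beta q^{h_j} \otimes x_j q^{h_j} = 0 \quad \text{in } U^0 \otimes U^{\le 0}_{-\beta}.
\]
Since the $h_j$ are distinct, the $K_\beta q^{h_j}$ are distinct elements of the group algebra $U^0 = \bigoplus_h \Q(q) q^h$, hence linearly independent; this forces $x_j q^{h_j} = 0$ for each $j$, and invertibility of $q^{h_j}$ then gives $x_j = 0$, proving injectivity.

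The main obstacle is showing that the co-multiplication $\Delta$ is well-defined on $U$, namely Proposition \ref{prop:co-mult}. That proposition, whose proof occupies most of Section 2, requires a careful analysis of the Drinfeld-type commutation relations; modulo it, the injectivity argument above --- including the inductive verification of the leading-term formula --- is a short formal computation.
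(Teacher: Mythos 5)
Your proof is correct and follows essentially the same route as the paper's: both establish surjectivity of the multiplication map from the commutation relation $q^h f_{jl} q^{-h} = q^{-l\langle h,\alpha_j\rangle}f_{jl}$, and prove injectivity by applying $\Delta$ and extracting the component of bidegree $(0,-\beta)$ to reduce to linear independence of the $q^h$ in $U^0$. The only cosmetic differences are that you invoke the involution $\omega$ for the second isomorphism (the paper says ``a similar argument'') and you work with arbitrary $x_j \in U^-_{-\beta}$ rather than first extracting a monomial basis of $U^-$.
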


\vskip 1mm 

\begin{proof} \ 
We will prove the first  isomorphism only because the second one would follow from a similar argument.  

\vskip 2mm

Since $U^{-}$ is spanned by the monomials in $f_{il}$ $((i,l) \in I^{\infty})$, we can extract a monomial basis ${\mathbf B}^{-} = \{f_{\tau} \mid \tau \in \Omega \}$ of $U^{-}$ indexed by an ordered set $\Omega$. By the defining relations \eqref{eq:Uhat}, we have a surjective homomorphism
$$ U^{-} \otimes U^{0} \longrightarrow U^{\le 0}$$
given by 
$$f_{\tau} \otimes q^{h} \longmapsto f_{\tau} q^{h} \quad  (\tau \in \Omega, \  h \in P^{\vee}).$$  
\vskip 2mm
\noindent
Hence we need to show $f_{\tau} q^{h}$ $(\tau \in \Omega, h \in P^{\vee})$ are linearly independent. 

\vskip 3mm

Note that ${\mathbf B}^{-}$ can be decomposed into a disjoint union ${\mathbf B}^{-} = \bigsqcup_{\beta \in Q_{+}} {\mathbf B}_{-\beta}$, where ${\mathbf B}_{-\beta}$ consists of the monomials $f_{\tau}$ with $\text{deg}\, f_{\tau} = - \beta$. 

\vskip 3mm 

Now consider the linear dependence relation 
\begin{equation} \label{eq:Ule0}
\sum_{\tau, h} c_{\tau,h} f_{\tau} q^{h} =0 \qquad \text{with} \ \ \tau \in \Omega, \, h \in P^{\vee}, \, c_{\tau, h} \in \Q(q).
\end{equation}

\vskip 2mm 
\noindent
By the above observation, the relation \eqref{eq:Ule0} can be written as 
$$\sum_{\beta \in Q_{+}} \left( \sum_{ \substack{\text{deg}\, f_{\tau}=-\beta \\ h \in P^{\vee}}} c_{\tau, h} f_{\tau} q^{h}   \right) = 0,$$
which yields 
\begin{equation}\label{eq:Ule0-1}
\sum_{ \substack{\text{deg}\, f_{\tau}=-\beta \\ h \in P^{\vee}}} c_{\tau, h} f_{\tau} q^{h} =0 \ \ \text{for all} \ \beta \in Q_{+}.
\end{equation}

\vskip 2mm 

Let us write $f_{\tau} = f_{i_1, l_1} \cdots f_{i_r, l_r}$ with $l_{1} \alpha_{i_1} + \cdots + l_{r} \alpha _{i_r} = \beta$ and recall that 
\begin{equation*}
\begin{aligned}
\Delta(f_{il})  = & \sum_{m+n =l} q_{(i)}^{-mn} f_{im} K_{i}^{n} \otimes f_{in} \\
 = & f_{il} \otimes 1 + q_{(i)}^{-(l-1)} f_{i, l-1} K_{i} \otimes f_{i,1} + q_{(i)}^{-2(l-2)} f_{i,l-2} K_{i}^{2} \otimes f_{i,2} \\
& + \cdots  + \, q_{(i)}^{-(l-1)} f_{i,1} K_{i}^{l-1} \otimes f_{i,l-1}  + K_{i}^{l} \otimes f_{i,l}.
\end{aligned}
\end{equation*}
Hence we may write 
\begin{equation*}
\begin{aligned}
\Delta(f_{\tau}) = & f_{i_1, l_1} \cdots f_{i_r, l_r} \otimes 1 + \cdots + ( \text {intermediate terms})  \\
& + \cdots + K_{i_1}^{l_1} \cdots K_{i_r}^{l_r} \otimes f_{i_1, l_1} \cdots f_{i_r, l_r},\\
= & f_{\tau} \otimes 1 + \cdots + ( \text {intermediate terms}) + \cdots + q^{h_{\tau}} \otimes f_{\tau},
\end{aligned}
\end{equation*}
where $q^{h_{\tau}} = K_{i_1}^{l_1} \cdots K_{i_r}^{l_r}$. Applying the co-multiplication $\Delta$ in \eqref{eq:Ule0-1}, we obtain 
\begin{equation*}
\begin{aligned}
0 = & \sum_{\tau, h} c_{\tau, h} \Delta(f_{\tau})(q^h \otimes q^h) \\
= & \sum_{\tau, h} c_{\tau, h} \left(f_{\tau} q^{h} \otimes q^{h} +  ( \text{intermediate terms}) + q^{h_{\tau}+h} \otimes f_{\tau} q^{h}\right).
\end{aligned}
\end{equation*}

Let us focus on the terms of bi-degree $(0, -\beta)$:
\begin{equation*}
\begin{aligned}
0 = & \sum_{\tau, h} c_{\tau, h} q^{h_{\tau} + h} \otimes f_{\tau} q^{h} = \sum_{h}\left(\sum_{\tau} c_{\tau, h} q^{h_{\tau} + h} \otimes f_{\tau} q^{h} \right) \\
= & \sum_{h} \left(q^{h_{\tau} + h} \otimes \left(\sum_{\tau} c_{\tau, h} f_{\tau} q^{h}\right)\right).
\end{aligned}
\end{equation*}

\vskip 2mm 
\noindent
Since $q^{h_{\tau} + h}$ $(h \in P^{\vee})$ are linearly independent, by an elementary property of tensor product, we conclude
$$\sum_{\tau} c_{\tau, h} f_{\tau} q^{h} =0 \ \ \text{for all} \ h \in P^{\vee},$$
which implies $\sum_{\tau} c_{\tau, h} f_{\tau} =0$. But $f_{\tau}$ $(\tau \in \Omega)$ are linearly independent. Hence $c_{\tau, h}=0$  for all $\tau \in \Omega$, $h \in P^{\vee}$ as desired.
\end{proof}

\vskip 2mm

We prove our main theorem in this section.

\vskip 3mm 

\begin{theorem} \label{thm:triangular} 
{\rm The quantum Borcherds-Bozec algebra $U_{q}(\g)$ has the following triangular decomposition:
\begin{equation} \label{eq:triangular}
U_{q}(\g) \cong U^{-} \otimes U^{0} \otimes U^{+}.
\end{equation}
}
\end{theorem}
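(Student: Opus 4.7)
The plan is to show that the multiplication map $\mu\colon U^{-}\otimes U^{0}\otimes U^{+}\to U_{q}(\g)$ is an isomorphism, with Lemma~\ref{lem:triangular} providing the crucial one-sided decompositions $U^{\le 0}\cong U^{-}\otimes U^{0}$ and $U^{\ge 0}\cong U^{0}\otimes U^{+}$.

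For surjectivity, I would show inductively that every monomial in the generators of $U_{q}(\g)$ can be straightened into $U^{-}U^{0}U^{+}$-form. Powers of $q^{h}$ commute past $e_{jl}$ and $f_{ik}$ up to scalars by \eqref{eq:Uhat}, so the problem reduces to pushing each $e_{jl}$ to the right of each $f_{ik}$. When $i\ne j$, \eqref{eq:commute} gives $e_{jl}f_{ik}=f_{ik}e_{jl}$; when $i=j$, Lemma~\ref{lem:string} (with $\tau_{i0}=1$) expresses $e_{il}f_{ik}$ as $f_{ik}e_{il}$ plus a sum of terms $e_{i,l-n}f_{i,k-n}K_{i}^{\pm n}$ with $n\ge 1$, each involving $e$'s and $f$'s of strictly smaller total index sum $(k-n)+(l-n)$. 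A double induction---first on the number of $e$--$f$ inversions in a monomial, then on the total size of its $e,f$-indices---brings every monomial into normal form.

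For injectivity, fix weight-homogeneous monomial bases $\{f_{\tau}\}$ of $U^{-}$ and $\{e_{\sigma}\}$ of $U^{+}$ as in Examples~\ref{ex:real}--\ref{ex:non-iso}; write $\deg f_{\tau}=-\beta_{\tau}$ and $\deg e_{\sigma}=\gamma_{\sigma}$. The element $K_{\tau}=\prod K_{i_{k}}^{l_{k}}$ appearing in the top tensor factor of the extremal term of $\Delta(f_{\tau})$ depends only on $\beta_{\tau}$ (by linear independence of $\Pi$), and similarly $K_{\sigma}$ depends only on $\gamma_{\sigma}$. Suppose $\sum c_{\tau,h,\sigma}\,f_{\tau}q^{h}e_{\sigma}=0$ is a nontrivial relation; after weight decomposition I may assume $\gamma_{\sigma}-\beta_{\tau}=\alpha$ is constant on the support. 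Apply $\Delta$ (well-defined on $U$ by Proposition~\ref{prop:co-mult}). From the extremal structure
$$\Delta(f_{\tau})=f_{\tau}\otimes 1+\cdots+K_{\tau}\otimes f_{\tau},\qquad \Delta(e_{\sigma})=e_{\sigma}\otimes K_{\sigma}^{-1}+\cdots+1\otimes e_{\sigma},$$
together with the observation that the first tensor factor of any component of $\Delta(f_{\tau})$ has degree in $[-\beta_{\tau},0]$ and that of $\Delta(e_{\sigma})$ in $[0,\gamma_{\sigma}]$, one checks that the bi-degree $(\gamma,-\beta)$ component of $\Delta(f_{\tau}q^{h}e_{\sigma})$ vanishes unless $\gamma\le\gamma_{\sigma}$ and $\beta\le\beta_{\tau}$, and in the extremal case $\gamma=\gamma_{\sigma}$, $\beta=\beta_{\tau}$ it is exactly $K_{\tau}q^{h}e_{\sigma}\otimes f_{\tau}q^{h}K_{\sigma}^{-1}$. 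Choose $\gamma_{*}$ maximal (in the partial order on $Q_{+}$) among $\{\gamma_{\sigma}:c_{\tau,h,\sigma}\ne 0\}$, and set $\beta_{*}=\gamma_{*}-\alpha$; extracting the bi-degree $(\gamma_{*},-\beta_{*})$ component of $\Delta$ applied to the relation leaves only contributions with $\gamma_{\sigma}=\gamma_{*}$ and $\beta_{\tau}=\beta_{*}$, yielding
$$\sum_{\gamma_{\sigma}=\gamma_{*},\,\beta_{\tau}=\beta_{*}} c_{\tau,h,\sigma}\,K_{\tau}q^{h}e_{\sigma}\otimes f_{\tau}q^{h}K_{\sigma}^{-1}=0.$$
Since $K_{\tau}$ and $K_{\sigma}$ are now fixed, the products $K_{\tau}q^{h}e_{\sigma}$ and $f_{\tau}q^{h}K_{\sigma}^{-1}$ lie in the bases of $U^{\ge 0}$ and $U^{\le 0}$ furnished by Lemma~\ref{lem:triangular}, and the map $(\tau,h,\sigma)\mapsto (K_{\tau}q^{h}e_{\sigma},\,f_{\tau}q^{h}K_{\sigma}^{-1})$ is injective (the $h$ is recoverable from either coordinate, $\tau$ and $\sigma$ directly); hence tensor-product linear independence forces every $c_{\tau,h,\sigma}$ on this layer to vanish. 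Iterating on the strictly smaller support completes the proof.

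The main obstacle is the extremal bi-degree analysis: one must verify that no $(\tau',\sigma')$ other than those with $\gamma_{\sigma'}=\gamma_{*}$ and $\beta_{\tau'}=\beta_{*}$ can contribute to the $(\gamma_{*},-\beta_{*})$ component, and that in the extremal case the leading coefficient is exactly $1$---both facts follow from \eqref{eq:co-mult} once the degree ranges of the tensor factors of $\Delta(f_{\tau})$ and $\Delta(e_{\sigma})$ are tracked carefully.
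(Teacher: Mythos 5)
Your proposal is correct and follows essentially the same route as the paper: surjectivity via a straightening argument based on \eqref{eq:commute} and Lemma~\ref{lem:string}, and injectivity by applying $\Delta$ (Proposition~\ref{prop:co-mult}), extracting the extremal bi-degree component where the leading term is $K_{\tau}q^{h}e_{\sigma}\otimes f_{\tau}q^{h}K_{\sigma}^{-1}$, and invoking the tensor-product linear independence furnished by Lemma~\ref{lem:triangular}. The only cosmetic difference is that the paper uses a total order on $Q$ (height plus lexicographic) to isolate the extremal layer while you work with the partial order and pick a maximal $\gamma_{*}$, but the iteration argument is the same in substance.
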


\begin{proof} \  We first show that there exists a surjective homomorphism 
\begin{equation} \label{eq:hom}
U^{-} \otimes U^{0} \otimes U^{+} \longrightarrow U_{q}(\g).
\end{equation}

That is, every element $u \in U_{q}(\g)$ can be written as 
\begin{equation} \label{eq:surjective}
u = \sum u^{-} u^{0} u^{+},
\end{equation}
where $u^{0} \in U^{0}$, $u^{\pm} \in U^{\pm}$. 

\vskip 3mm

By the defining relations of $U_{q}(\g)$, we have only to verify that $e_{jl} \, f_{ik}$  for $(i,k), (j,l) \in I^{\infty}$ can be written in the form of \eqref{eq:surjective}.   If $i \neq j$, we have seen that $e_{jl}\,f_{ik} = f_{ik}\, e_{jl}$ and \eqref{eq:surjective} is verified. So we will focus on the case when $i=j$. 

\vskip 3mm 

As in Section 2, we denote by $L$ and $R$ the left-hand side and the right-hand side of the relation \eqref{eq:string}\,:
\begin{equation*}
\begin{aligned}
& L=\sum_{\substack{m+n=k \\ n+s=l}} q_{(i)}^{n(m-s)} \tau_{in} \, e_{is} \, f_{im} \, K_{i}^{-n},\\
& R= \sum_{\substack{m+n=k \\ n+s=l}} q_{(i)}^{-n(m-s)} \tau_{in} \, f_{im}\, e_{is} \, K_{i}^{n}. 
\end{aligned}
\end{equation*}

Note that $$e_{is} K_{i}^{n} = q_{i}^{-nsa_{ii}} e_{is}= q_{(i)}^{-2ns} K_{i}^{n} e_{is}.$$ Hence 
$$ R= \sum_{\substack{m+n=k \\ n+s=l}} q_{(i)}^{-n(m+s)} \tau_{in} \, f_{im}\, K_{i}^{n} \, e_{is},$$
which is in the form of \eqref{eq:surjective}.

\vskip 2mm 

For all $k,l>0$, we will show that 
$$e_{il} \, f_{ik} = R -S,$$
where $S$ is in the form of \eqref{eq:surjective}. 

\vskip 2mm

Since 
$$e_{is}\, K_{i}^{-n} = q_{i}^{ns a_{ii}} K_{i}^{_n}\, e_{is} = q_{(i)}^{2ns} K_{i}^{-n}\, e_{is},$$
we have 
$$L=\sum_{\substack{m+n=k \\ n+s=l}} q_{(i)}^{-n(m-s)} \tau_{in} \,K_{i}^{-n} \, e_{is} \, f_{im}.$$

\vskip 3mm

{\bf Case 1\,:} Suppose $k=l$. 

\vskip 2mm 

In this case, 
\begin{equation*}
\begin{aligned}
L = & \sum_{n=0}^{k} \tau_{in}\, K_{i}^{-n} \, e_{i, k-n}\, f_{i,k-n}\\
= & \tau_{i0}\, e_{ik} \, f_{ik} + \tau_{i1} \, K_{i}^{-1}\, e_{i,k-1}\, f_{i,k-1} 
+ \tau_{i2} \, K_{i}^{-2} e_{i, k-2}\, f_{i,k-2}  \\
& + \cdots + \tau_{i,k-1} \, K^{-k+1}\, e_{i1}\, f_{i1} + \tau_{ik}\, K_{i}^{-k}.
\end{aligned}
\end{equation*}

\vskip 2mm 

We will prove our assertion by induction on $k$. 

\vskip 1mm 
If $k=1$, we have  
$$L=\tau_{i0}\, e_{i1}\, f_{i1} + \tau_{i1} \, K_{i}^{-1} =R,$$
which implies
$$e_{i1}\, f_{i1} = \frac{1}{\tau_{i0}} \left(R-\tau_{i1}\, K_{i}^{-1} \right) \in U^{-} \, U^{0}\, U^{+},$$
as desired. 

\vskip 2mm 

If $k > 1$, set 
$$S=\tau_{i1} \, K_{i}^{-1} \, e_{i, k-1} \, f_{i, k-1} + \cdots + 
\tau_{i, k-1} \, K^{-k+1} \, e_{i1}\, f_{i1} + \tau_{ik}\, K_{i}^{-k},$$
so that 
$$L=\tau_{i0}\, e_{ik}\, f_{ik} + S = R.$$
\vskip 2mm 

By induction hypothesis, all $e_{i1}f_{i1}$,  $e_{i2}f_{i2}$, $\ldots$,  $e_{i, k-1}f_{i, k-1}$ can be written 
in the form of  \eqref{eq:surjective}. Thus we can verify that $S \in U^{-}\, U^{0}\, U^{+}$, which yields
$$e_{ik}\, f_{ik} = \frac{1}{\tau_{i0}} (R-S) \in U^{-}\, U^{0}\, U^{+}.$$

\vskip 3mm 

{\bf Case 2\,:} Suppose $k<l$. 

\vskip 2mm 

In this case, 
\begin{equation*}
\begin{aligned}
L = & \sum_{n=0}^{k} q_{(i)}^{n(l-k)} \tau_{in}\, K_{i}^{-n} \, e_{i, l-n}\, f_{i,k-n}\\
= & \tau_{i0}\, e_{il} \, f_{ik} + q_{(i)}^{l-k} \tau_{i1} \, K_{i}^{-1}\, e_{i,l-1}\, f_{i,k-1} 
+ q_{(i)}^{2(l-k)} \tau_{i2} \, K_{i}^{-2} e_{i, l-2}\, f_{i,k-2}  \\
& + \cdots + q_{(i)}^{(k-1)(l-k)}\tau_{i,k-1} \, K^{-k+1}\, e_{i, l-k+1}\, f_{i1} 
+ q_{(i)}^{k(l-k)} \tau_{ik}\, K_{i}^{-k} \, e_{i,l-k}.
\end{aligned}
\end{equation*}

\vskip 2mm 

If $k=1$, then 
$$L=\tau_{i0} \, e_{il} \, f_{i1} + q_{(i)}^{l-1} \tau_{i1} \, K_{i}^{-1} e_{i, l-1} =R,$$
which implies 
$$e_{i,l-1}\, f_{i1} = \frac{1}{\tau_{i0}} \left(R-q_{(i)}^{l-1} \tau_{i1}\, K_{i}^{-1}\, e_{i, l-1}\right) \in U^{-}\, U^{0}\, U^{+}.$$

\vskip 2mm 

Assume that $k>1$ and set 
$$ S=q_{(i)}^{l-k} \tau_{i1} \, K_{i}^{-1}\, e_{i,l-1}\, f_{i,k-1} 
+ \cdots + q_{(i)}^{k(l-k)} \tau_{ik}\, K_{i}^{-k} \, e_{i,l-k}.$$
Then using the insduction hypothesis, we conclude 
$$e_{il}\, f_{ik} = \frac{1}{\tau_{i0}} \left(R-S \right) \in U^{-}\, U^{0}\, U^{+}.$$

\vskip 2mm

{\bf Case 3\, :} When $k>l$, we can prove our assertion  using the same argument as above.

\vskip 3mm 

We now prove the injectivity of the homomorphism \eqref{eq:hom}.
Let ${\mathbf B}^{+}=\{e_{\tau} \mid \tau \in \Omega\}$ denote a monomial basis of $U^{+}$.  We need to show that the set ${\mathbf B} = \{ f_{\tau} q^h e_{\mu} \mid \tau, \mu \in \Omega, h \in P^{\vee} \}$ is linearly independent. As in Lemma \ref{lem:triangular}, we have only to consider the linear dependence relation 
\begin{equation} \label{eq:dependence}
\sum_{\substack{h \in P^{\vee} \\ \text{deg}\, f_{\tau} + \text{deg}\, e_{\mu} = \gamma}} c_{\tau, h, \mu} f_{\tau} q^h e_{\mu} =0
\end{equation}  
for all $\gamma \in Q$. 

\vskip 2mm 

Write 
\begin{equation*}
\begin{aligned}
& \Delta(e_{\mu}) = e_{\mu} \otimes q^{-h_{\mu}} + (\text{intermediate terms}) + 1 \otimes e_{\mu}, \\
& \Delta(f_{\tau}) = f_{\tau} \otimes 1 + (\text{intermediate terms}) + q^{h_{\tau}} \otimes f_{\tau}
\end{aligned}
\end{equation*}
so that we have 
\begin{equation*}
\begin{aligned}
0  = & \Delta \left(\sum_{\substack{h \in P^{\vee} \\ \text{deg}\, f_{\tau} + \text{deg}\, e_{\mu} = \gamma}} c_{\tau, h, \mu} f_{\tau} q^h e_{\mu} \right) \\
 = & \sum_{ \substack{h \in P^{\vee} \\ \text{deg}\, f_{\tau} + \text{deg}\, e_{\mu} = \gamma}} c_{\tau, h, \mu} \left(f_{\tau} \otimes 1 + (\text{intermediate terms}) + q^{h_{\tau}} \otimes f_{\tau} \right) \\
 & \qquad \times (q^h \otimes q^h) (e_{\mu} \otimes q^{-h_{\mu}} + (\text{intermediate terms}) + 1 \otimes e_{\mu}).
\end{aligned}
\end{equation*}

\vskip 2mm 

Take a total ordering $\le$ on $Q$ given by the height and lexicographic ordering. Let $\Omega_{0}$ (resp. $\Omega_{1}$) be the set of all $\tau \in \Omega$ (resp. $\mu \in \Omega$) such that $\text{deg}\, f_{\tau}$ (resp. $\text{deg}\, e_{\mu}$) is minimal (resp. maximal) among the terms appearing in \eqref{eq:dependence} with respect to $\le$.  Since $\text{deg}\, f_{\tau} \in Q_{-}$, $\text{deg}\, e_{\mu} \in Q_{+}$ and $\text{deg}\, f_{\tau} + \text{deg}\, e_{\mu} = \gamma$, it is clear that $\tau \in \Omega_{0}$ if and only if $\mu \in \Omega_{1}$. 

\vskip 3mm

We now focus on the terms of bi-degree $(\text{max}, \text{min})$ in \eqref{eq:dependence}, which gives
\begin{equation*}
0  = \sum_{\substack {h \in P^{\vee} \\ \tau \in \Omega_{0} \\ \mu \in \Omega_{1}}}    c_{\tau, h, \mu} q^{h_{\tau} + h} e_{\mu} \otimes f_{\tau} q^{h - h_{\tau}}  = \sum_{\substack{\tau \in \Omega_{0} \\ h \in P^{\vee}}} \left(\sum_{\mu \in \Omega_{1}} c_{\tau, h, \mu} q^{h_{\tau} + h} e_{\mu} \right) \otimes f_{\tau} q^{h-h_{\mu}}.
\end{equation*}

\vskip 2mm 
\noindent
Since $f_{\tau} q^{h-h_{\mu}}$ $(\tau \in \Omega_{0}, h\in P^{\vee})$   are linearly independent, we have 
$$\sum_{\mu \in \Omega_{1}} c_{\tau, h, \mu} q^{h_{\tau}+h} e_{\mu} =0.$$
Therefore $c_{\tau, h, \mu} q^{h_{\tau}+h}=0$ and hence $c_{\tau, h, u}=0$ for all $\tau \in \Omega_{0}$, $\mu \in \Omega_{1}$.

\vskip 2mm 

 Repeat this process along with the total ordering $\le$ on $Q$, we conclude $c_{\tau, h, \mu}=0$ for all $h \in P^{\vee}$, $\tau, \mu \in \Omega$, which proves our theorem. 
\end{proof}

\vskip 3mm

\begin{remark} \
{\rm The surjectivity of the homomorphism \eqref{eq:hom} can be proved using \cite[Proposition 3.10 (ii)]{Bozec2014c}. In \cite[Remark 3.23]{Bozec2014c}, it was mentioned that the quantum Borcherds-Bozec algebras have a triangular decomposition.}
\end{remark}

\vskip 8mm

\section{Highest weight representation theory}

\vskip 2mm

Let $U_{q}(\g)$ be a quantum Borcherds-Bozec algebra and let $M$ be a $U_{q}(\g)$-module. We say that $M$ has
a {\it weight space decomposition} if
$$M = \bigoplus_{\mu \in P} M_{\mu}, \ \ \text{where}
 \ M_{\mu} = \{ m \in M \mid q^{h} \,  m = q^{\langle h, \mu \rangle} m \
\text{for all} \ h \in P^{\vee} \}.$$ 

We denote  $\text{wt}(M):=\{\mu \in
\h^* \mid M_{\mu} \neq 0 \}$. When $\text{dim} \, M_{\mu} < \infty$ for all $\mu \in P$, we define the {\it character} of $M$ to be 
$$\text{ch} (M) = \sum_{\mu \in P} (\dim \, M_{\mu}) e^{\mu},$$
where $e^{\mu}$ $(\mu \in P)$ are multiplicative basis vectors of the group algebra of $P$. 
A non-zero vector $m \in M_{\mu}$ is said to be {\it of weight $\mu$}. If $m$ is annihilated by all $e_{il}$ $((i,l) \in I^{\infty})$, $m$ is called a {\it maximal  vector} of weight $\mu$. 

\vskip 3mm

A $U_{q}(\g)$-module $V$ is called a {\it highest weight module with highest weight $\lambda$} if there is a non-zero vector $v_{\lambda}$ in $V$ such that 
\begin{enumerate}
\item[(i)] $q^{h} \,  v_{\lambda} = q^{\langle h, \lambda \rangle} v_{\lambda}$ for all $h \in P^{\vee}$,

\item[(ii)] $e_{il} \, v_{\lambda} = 0$ for all $(i,l) \in I^{\infty}$, 

\item[(iii)] $V=U_{q}(\g) v_{\lambda}$. 
\end{enumerate}

\vskip 2mm

Such a vector $v_{\lambda}$ is called a {\it highest weight vector} with highest weight $\lambda$. 
Note that $V_{\lambda} = \Q(q) v_{\lambda}$ and $V$ has a weight space decomposition $V = \bigoplus_{\mu \le \lambda} V_{\mu}$. If a $U_{q}(\g)$-module $M$ has a weight space decomposition, a maximal vector of weight $\lambda$ would generate a highest weight submodule with highest weight $\lambda$.  

\vskip 3mm

For $\lambda \in P$, let $J(\lambda)$ be the left ideal of
$U_{q}(\g)$ generated by the elements $q^{h}-q^{\langle h, \lambda \rangle}
\mathbf{1}$ $(h \in P^{\vee})$ and $e_{il}$  $((i,l) \in I^{\infty})$. 
Set $M(\lambda):= U_{q}(\g) \big / J(\lambda)$. Then $M(\lambda)$ becomes a $U_{q}(\g)$-module, called the {\it Verma module}, via left multiplication. The following properties of $M(\lambda)$ are straightforwad consequences of the definition. 

 \vskip 3mm
 
 \begin{proposition} \label{prop:Verma} \  
 {\rm The Verma module $M(\lambda)$ satisfies  the following properties. 
 
 \begin{enumerate}
 
 \item[(a)] $M(\lambda)$ is a highest weight module with highest weight $\lambda$.

 \item[(b)] $M(\lambda)$ has a unique maximal submodule.

 \item[(c)] $M(\lambda)$ is a free $U^{-}$-module of rank 1.

 \item[(d)] Every highest weight module with highest weight $\lambda$ is a quotient module of $M(\lambda)$. 
\end{enumerate} 
}
 \end{proposition}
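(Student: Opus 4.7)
The plan is to exploit the triangular decomposition of Theorem \ref{thm:triangular} as the main engine, and to establish the four statements in the order (a), (d), (c), (b), since (c) will rest on a quick application of (a) together with the triangular decomposition.

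For (a), I would set $v_{\lambda} := 1 + J(\lambda) \in M(\lambda)$. By the very definition of $J(\lambda)$, the relations $q^{h} v_{\lambda} = q^{\langle h, \lambda \rangle} v_{\lambda}$ for $h \in P^{\vee}$ and $e_{il} v_{\lambda} = 0$ for $(i,l)\in I^{\infty}$ are immediate, and the cyclicity $M(\lambda) = U_{q}(\g) v_{\lambda}$ is tautological. For (d), given any highest weight module $V$ with highest weight vector $v$, the map $U_{q}(\g) \rightarrow V$ sending $u \mapsto u v$ is a surjective $U_{q}(\g)$-module homomorphism whose kernel contains all generators of $J(\lambda)$ by the three defining conditions of a highest weight vector; hence it factors through $M(\lambda)$ and exhibits $V$ as a quotient.

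The crux is (c). By Theorem \ref{thm:triangular}, every $u \in U_{q}(\g)$ admits a unique expression as a sum of tensors $f_{\tau} q^{h} e_{\mu}$. I would show that
\[
J(\lambda) \;=\; \bigl(U^{-} \otimes U^{0} \otimes U^{+}_{>0}\bigr) \;+\; \bigl(U^{-} \otimes K_{\lambda} \otimes \Q(q)\cdot 1\bigr),
\]
where $U^{+}_{>0}$ is the augmentation ideal of $U^{+}$ (the span of monomials of positive $Q_{+}$-degree) and $K_{\lambda}$ is the $\Q(q)$-span of $\{q^{h} - q^{\langle h,\lambda\rangle}\mathbf{1} \mid h \in P^{\vee}\}$ inside $U^{0}$. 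The inclusion $\supseteq$ is clear since both summands lie in $J(\lambda)$. For $\subseteq$, a typical generator of $J(\lambda)$ is of the form $(f_{\tau} q^{h} e_{\mu}) \cdot x$ where $x$ is either $e_{il}$ or $q^{h'} - q^{\langle h', \lambda \rangle}\mathbf{1}$; in either case, after commuting $x$ past $e_{\mu}$ and $q^{h}$ using the defining relations of $\widehat{U}$ (which introduce only scalar factors and keep $x$ of the same type), the result visibly lies in the right-hand side. Taking the quotient then gives a $\Q(q)$-linear isomorphism $U^{-} \stackrel{\sim}{\longrightarrow} M(\lambda)$, $f_{\tau} \mapsto f_{\tau} v_{\lambda}$, which is $U^{-}$-linear by left multiplication, proving (c). The main obstacle in (c) is precisely this normal-ordering step: because of the Drinfeld-type commutation relations analyzed in Section 2, pushing $e_{il}$ past the $U^{-}$ and $U^{0}$ factors produces terms of mixed degree, and one must verify that each such term is itself of the stated form; here one uses the relation $e_{jl} f_{ik} = f_{ik} e_{jl}$ for $i \ne j$ and the quantum $i$-string analysis culminating in the triangular decomposition proof.

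Finally, (b) follows by a standard weight argument: $M(\lambda)$ inherits a weight space decomposition $M(\lambda) = \bigoplus_{\mu \le \lambda} M(\lambda)_{\mu}$ from (c), with $M(\lambda)_{\lambda} = \Q(q) v_{\lambda}$ one-dimensional. Any submodule $N$ is automatically weight-graded, and $N$ is proper if and only if $v_{\lambda} \notin N$. The sum $N_{\max}$ of all proper submodules is then itself proper (it misses $v_{\lambda}$, which generates the whole module) and clearly contains every proper submodule, so it is the unique maximal submodule.
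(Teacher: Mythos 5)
The paper gives no proof of this proposition; it simply remarks that the properties "are straightforward consequences of the definition." Your write‑up is correct and supplies the details: (a) and (d) are the standard tautologies, (b) is the usual weight‑graded argument using $\dim M(\lambda)_{\lambda}=1$, and (c) correctly exploits Theorem \ref{thm:triangular} via the explicit description
$J(\lambda) = (U^{-}\otimes U^{0}\otimes U^{+}_{>0}) \oplus (U^{-}\otimes K_{\lambda}\otimes \Q(q)\cdot\mathbf{1})$, from which $M(\lambda)\cong U^{-}\otimes (U^{0}/K_{\lambda})\cong U^{-}$ as a free rank‑one $U^{-}$‑module.

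One caveat on your discussion of (c). The claim that commuting $x=q^{h'}-q^{\langle h',\lambda\rangle}\mathbf{1}$ past $e_{\mu}$ "keep[s] $x$ of the same type" is not literally true: if $\deg e_{\mu}=\beta\neq 0$, one gets $e_{\mu}\,x = q^{-\langle h',\beta\rangle}\bigl(q^{h'}-q^{\langle h',\lambda+\beta\rangle}\bigr)e_{\mu}$, which is of a different "type." Nothing breaks, because in that case the surviving $e_{\mu}\in U^{+}_{>0}$ already places the term in the first summand, and when $e_{\mu}=1$ the identity $q^{h+h'}-q^{\langle h',\lambda\rangle}q^{h} = (q^{h+h'}-q^{\langle h+h',\lambda\rangle})-q^{\langle h',\lambda\rangle}(q^{h}-q^{\langle h,\lambda\rangle})$ puts it in $U^{-}\otimes K_{\lambda}$. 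Relatedly, your closing paragraph about "pushing $e_{il}$ past the $U^{-}$ and $U^{0}$ factors" being the main obstacle overstates matters: since $J(\lambda)$ is a left ideal and you normal‑order the left factor $u=f_{\tau}q^{h}e_{\mu}$ first, the only commutation ever needed is $q^{h'}$ past $e_{\mu}$, a pure $q$‑rescaling. The Drinfeld‑type relations and $i$‑string analysis are entirely absorbed into the proof of Theorem \ref{thm:triangular} and do not recur in the passage to (c).
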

 
 \vskip 2mm 
 
Let $R(\lambda)$ be the unique maximal submodule of $M(\lambda)$ and let $V(\lambda):=M(\lambda) \big / R(\lambda)$ the irreducible quotient of $M(\lambda)$, which is also a highest weight module with highest weight $\lambda$. The following proposition is one of the most important ingredients of the integrable representation theory of quantum Borcherds-Bozec algebras. 

\vskip 3mm

\begin{proposition} \label{prop:hw} 
{\rm Let $\lambda \in P^{+}$ be a dominant integral weight and let $V(\lambda) = U_{q}(\g) \, v_{\lambda}$ be the irreducible highest weight module with highest weight $\lambda$ and highest weight vector $v_{\lambda}$. 

Then the followng statements hold. 

\begin{enumerate}
\item[(a)] If $i \in I^{\text{re}}$, then $f_{i}^{\langle h_{i}, \lambda \rangle +1} v_{\lambda} =0$.   


\item[(b)] If $i \in I^{\text{im}}$ and $\langle h_{i}, \lambda \rangle =0$, then $f_{ik} \, v_{\lambda}=0$ for all $k>0$. 
\end{enumerate}}
\end{proposition}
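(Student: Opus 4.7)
The plan is to prove both (a) and (b) by exhibiting the vector in question as a maximal vector of weight strictly below $\lambda$, and then invoking the irreducibility of $V(\lambda)$ to force it to vanish. Since every maximal vector of weight $\mu$ generates a highest weight submodule of highest weight $\mu$, any nonzero such vector with $\mu < \lambda$ would generate a proper nontrivial submodule, contradicting irreducibility.

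For part (a), set $N := \langle h_i, \lambda \rangle$ and work inside the subalgebra $U_{(i)} \cong U_q(\mathfrak{sl}_2)$ identified in Example \ref{ex:real}. The standard quantum $\mathfrak{sl}_2$-identity
\[
e_i f_i^{n} v_\lambda = [n]_i \,\frac{q_i^{N-n+1} - q_i^{-(N-n+1)}}{q_i - q_i^{-1}}\, f_i^{n-1} v_\lambda
\]
vanishes at $n = N+1$, so $e_i f_i^{N+1} v_\lambda = 0$. For any $(j,l) \in I^{\infty}$ with $j \neq i$, the commutation relation \eqref{eq:commute} yields $e_{jl} f_i^{N+1} v_\lambda = f_i^{N+1} e_{jl} v_\lambda = 0$, and since $i \in I^{\text{re}}$ there are no further generators $e_{i,l}$ with $l > 1$ to check. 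Thus $f_i^{N+1} v_\lambda$ is a maximal vector of weight $\lambda - (N+1)\alpha_i < \lambda$ and must therefore vanish.

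Part (b) proceeds by induction on $k \ge 1$. Since $\langle h_i, \lambda \rangle = 0$ we have $K_i^{\pm n} v_\lambda = v_\lambda$ for all $n$, and for $j \neq i$ the relation \eqref{eq:commute} again gives $e_{jl} f_{ik} v_\lambda = 0$, so the task reduces to showing $e_{il} f_{ik} v_\lambda = 0$ for every $l > 0$. Apply Lemma \ref{lem:string} to $v_\lambda$: using $K_i^{\pm n} v_\lambda = v_\lambda$ and $e_{is} v_\lambda = 0$ for $s > 0$, the right-hand side of \eqref{eq:string} collapses to $q_{(i)}^{-l(k-l)} \tau_{il} f_{i,k-l} v_\lambda$ when $l \le k$ and vanishes when $l > k$. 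On the left-hand side, the $n = 0$ summand is exactly $\tau_{i0} e_{il} f_{ik} v_\lambda$; every other summand involves either a factor $f_{i,k-n} v_\lambda$ with $0 < k-n < k$, which is annihilated by the inductive hypothesis, or (when $n = k$) a factor $e_{i,l-k} v_\lambda$ that vanishes unless $l = k$. In the delicate $l = k$ case, the surviving $n = k$ terms on the two sides carry the same coefficient $\tau_{ik}$ and cancel, while for $l < k$ the one surviving right-hand side term is itself killed by induction. A brief case split among $l < k$, $l = k$, $l > k$ thus leaves $\tau_{i0} e_{il} f_{ik} v_\lambda = 0$, and since $\tau_{i0} = 1$ (cf.\ Example \ref{ex:real}) we conclude $e_{il} f_{ik} v_\lambda = 0$. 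Hence $f_{ik} v_\lambda$ is a maximal vector of weight $\lambda - k\alpha_i < \lambda$, and irreducibility of $V(\lambda)$ forces $f_{ik} v_\lambda = 0$, completing the induction.

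The main obstacle is exactly the bookkeeping of this inductive step: one must track how Lemma \ref{lem:string} specializes on $v_\lambda$ in each of the three regimes $l < k$, $l = k$, $l > k$ and verify that the induction hypothesis, combined with the vanishing of $e_{is} v_\lambda$ for $s > 0$, leaves precisely $\tau_{i0} e_{il} f_{ik} v_\lambda$ after all cancellations. It is this interplay between the Drinfeld-type relations of Section 2 and the action of higher $e_{is}$ on $v_\lambda$ that forms the technical heart of the proposition.
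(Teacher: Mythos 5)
Your proof is correct and follows essentially the same route as the paper's: for (a), the $U_q(\mathfrak{sl}_2)$ argument together with \eqref{eq:commute}, and for (b), an induction on $k$ that feeds Lemma \ref{lem:string} with $v_\lambda$, using $K_i^{\pm n}v_\lambda = v_\lambda$, $e_{is}v_\lambda = 0$ for $s>0$, and the induction hypothesis to isolate $\tau_{i0}\,e_{il}f_{ik}v_\lambda$ across the three regimes $l<k$, $l=k$, $l>k$. The only organizational difference is that the paper proves the $l=k$ identity $e_{ik}f_{ik}v_\lambda=0$ for all $k$ by a separate preliminary induction and then invokes it, whereas you fold that case into the single induction on $k$; both versions are sound.
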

 
 \begin{proof} 
 
 (a) If $i \in I^{\text{re}}$, by the $U_{q}(sl_{2})$-representation theory, it is known that  $e_{i} f_{i}^{\langle h_{i}, \lambda \rangle +1} v_{\lambda} =0$. If $(j,l) \neq i$, then $j \neq i$ and by \eqref{eq:commute}, we have 
 $$e_{jl} \, f_{i}^{\langle h_{i}, \lambda \rangle + 1} v_{\lambda} = f_{i}^{\langle h_{i}, \lambda \rangle + 1} \, e_{jl} \, v_{\lambda} =0.$$
 Hence if $f_{i}^{\langle h_{i}, \lambda \rangle + 1} v_{\lambda} \neq 0$, it is a maximal  vector and would generate a highest weight submodule of $V(\lambda)$ with highest weight $\lambda - (\langle h_{i}, \lambda \rangle + 1)\alpha_{i} < \lambda$. Since $V(\lambda)$ is irreducible, this is a contradiction.  
 
 \vskip 3mm 
 
 (b) For each $(i,k) \in I^{\infty}$, we will show that $e_{jl} \, f_{ik}\, v_{\lambda} =0$ for all $(j,l) \in I^{\infty}$. 
 
 \vskip 3mm 
 
  If $j \neq i$, as in \eqref{eq:commute}, we have 
 \begin{equation}\label{eq:distinct}
 e_{jl} \, f_{ik} v_{\lambda} = f_{ik} \, e_{jl} \, v_{\lambda} =0 \ \ \text{for all} \ l>0 .
 \end{equation}
 
 \vskip 2mm 
 
 So we will concentrate on the case when $j=i$.  In this case, Lemma \ref{lem:string} yields 
 \begin{equation} \label{eq:hw-0} 
\sum_{\substack{m+n=k \\ n+s=l}} q_{(i)}^{n(m-s)} \tau_{in} \, e_{is} \, f_{im} \, K_{i}^{-n}
= \sum_{\substack{m+n=k \\ n+s=l}} q_{(i)}^{-n(m-s)} \tau_{in} \, f_{im}\, e_{is} \, K_{i}^{n}. 
\end{equation}

\vskip 1mm 

As in Lemma \ref{lem:string}, let $L$ (resp. $R$) denote the left-hand side (resp. right-hand side) of the above equation.  
We will prove our claim in 3 steps. Note that, since $\langle h_{i}, \lambda \rangle =0$, we have $K_{i}^{\pm 1} \ v_{\lambda} = v_{\lambda}$. 

\vskip 3mm 

{\bf Step 1:}  \ Suppose $k=l$. In this case, we have $m=s$ and 
$$R(v_{\lambda}) =  \sum_{n=0}^{k} \tau_{in} \, f_{i, k-n} e_{i, k-n} \, v_{\lambda} = \tau_{ik} \, v_{\lambda}.$$ 

\vskip 1mm 

On the other hand, 
$$L(v_{\lambda}) = \sum_{n=0}^{k} \tau_{in} \, e_{i, k-n}\, f_{i,k-n}\, v_{\lambda} =  \sum_{n=0}^{k-1} \tau_{in} \, e_{i, k-n}\, f_{i,k-n}\, v_{\lambda} + \tau_{i,k}\, v_{\lambda}.$$
It follows that 
$$\sum_{n=0}^{k-1} \tau_{in} \, e_{i, k-n}\, f_{i,k-n}\, v_{\lambda} =0.$$

\vskip  2mm 
Hence by induction on $k$, we obtain
\begin{equation}
 \label{eq:hw-1}
 e_{ik}\, f_{ik} \, v_{\lambda} =0 \ \ \text{for all} \ k>0.
\end{equation}

\vskip 3mm

{\bf Step 2:} Suppose $k < l$. In this case, we have 
\begin{equation*}
R(v_{\lambda}) = \sum_{n=0}^{k} q_{(i)}^{-n(k-l)} \tau_{in}\, f_{i, k-n} \, e_{i, l-n} v_{\lambda}=0
\end{equation*}
because $l-n >0$ for all $n=0, 1, \ldots, k$, which implies
\begin{equation*}
L(v_{\lambda}) = \sum_{n=0}^{k} q_{(i)}^{n(k-l)} \tau_{in} \, e_{i, l-n}\, f_{i,k-n}\, v_{\lambda} =0. 
\end{equation*}

\vskip 1mm 

 If $k=1$, we have $l >1$ and  
 $$0 = \tau_{i,0} \,e_{i,l} f_{i,1} \, v_{\lambda} + q_{(i)}^{1-l} \tau_{i,1} \,e_{i, l-1} \, f_{i,0} \, v_{\lambda} = \tau_{i,0} \, e_{i,l} f_{i,1} v_{\lambda}.$$
 Hence $e_{i,l} \, f_{i,1} \, v_{\lambda} =0$ for $l>1$. Note that \eqref{eq:hw-1} gives $e_{i,1} \, f_{i,1} v_{\lambda} =0$. Therefore, we get $e_{i,l}\, f_{i,1} \, v_{\lambda} =0$ for all $l\ge 1$, which implies $f_{i,1} \, v_{\lambda} =0$, for otherwise, it would generate a highest weight submodule with highest weight $\lambda - \alpha_{i} < \lambda$. 
 
 \vskip 3mm 
 
 Assume that 
 \begin{equation} \label{eq:hypo}
 f_{i,1} \, v_{\lambda} =0, \ldots, f_{i, k-1} v_{\lambda} =0.
 \end{equation}

  \vskip 2mm
 
 Then we have 
 \begin{equation*}
 \begin{aligned}
 0 = &  L(v_{\lambda}) =  \sum_{n=0}^{k} q_{(i)}^{n(k-l)} \tau_{in} \, e_{i, l-n}\, f_{i,k-n}\, v_{\lambda} \\
  = & \tau_{i,0}\, e_{i,l}\, f_{i,k} \, v_{\lambda} + q_{(i)}^{k-l} \tau_{i,1} \, e_{i, l-1}\, f_{i, k-1} \, v_{\lambda} + q_{(i)}^{2(k-l)} \tau_{i,2}\, e_{i, l-2}\, f_{i, k-2}\, v_{\lambda} \\
 & + \cdots + q_{(i)}^{(k-1)(k-l)} \tau_{i,k-1}\, e_{i, l-k+1} \, f_{i,1} v_{\lambda} + q_{(i)}^{k(k-l)} \tau_{i,k}  \, e_{i,l} \, f_{i,0} \, v_{\lambda} \\
 = & \tau_{i,0} \, e_{i,l} \,f_{i,k} \, v_{\lambda}.
 \end{aligned}
 \end{equation*}  
 Therefore, combined with \eqref{eq:hw-1}, we obtain 
 $$e_{i,l} \, f_{i,k} \,v_{\lambda} =0 \ \ \text{for all} \ l\ge k.$$
 
 \vskip 2mm 
 
 {\bf Step 3:} Suppose $k>l$. Then, since $k-l < k$,  by the induction hypothesis \eqref{eq:hypo}, the relation \eqref{eq:hw-0} implies 
 $$R(v_{\lambda}) = \sum_{n=0}^{l} q_{(i)}^{-n(k-l)} \tau_{i,n}\, f_{i,k-n}\, e_{i, l-n} \, v_{\lambda} = q_{(i)}^{-l(k-l)} f_{i,k-l}\, v_{\lambda} =0.$$
 
 \vskip 1mm
  
On the other hand, by the induction hypothesis \eqref{eq:hypo} again, 
we have 
\begin{equation*}
\begin{aligned}
L(v_{\lambda}) & = \sum_{n=0}^{l} q_{(i)}^{n(k-l)} \tau_{i,n} \, e_{i, l-n}\, f_{i, k-n}\, v_{\lambda} \\
& = \tau_{i,0} \, e_{i,l} \, f_{i,k} \, v_{\lambda} + q_{(i)}^{k-l} \tau_{i,1} \, e_{i, l-1} \, f_{i, k-1} \, v_{\lambda} + \cdots + q_{(i)}^{l(k-l)} \tau_{i,l} \, e_{i,0} \, f_{i, k-l} \, v_{\lambda} \\
& = \tau_{i,0}\, e_{i,l} \, f_{i,k} \, v_{\lambda} =0, 
\end{aligned}
\end{equation*}
which yields $e_{i,l} \, f_{i,k} \, v_{\lambda} =0$ for all $l<k$. 

\vskip 3mm 

Therefore, combing \eqref{eq:distinct}, (Step 1) and (Step 2), we obtain
$$e_{j,l} \, f_{i,k} \, v_{\lambda} =0 \ \ \text{for all} \ (j,l) \in I^{\infty}.$$
Hence by induction on $k$, we conclude $f_{i,k} \, v_{\lambda} =0$ for all $k >0$,
which proves our claim.   
\end{proof}
 
\vskip 3mm 

\begin{example} \label{ex:string} \ {\rm In this example, we briefly describe the structure of the irreducible highest weight module $V(\lambda)$ over the quantum string algebra $U_{(i)}$ for $i\in I^{\text{im}}$. If $\langle h_{i}, \lambda \rangle =0$, $V(\lambda)$ is the 1-dimensional trivial representation.

\vskip 2mm 

If $\langle h_{i}, \lambda \rangle >0$, then $V(\lambda)$ is isomorphic to the Verma module $M(\lambda)$ and ${\mathbf B}(\lambda):= \{b \, v_{\lambda} \mid b \in {\mathbf B} \}$ is a basis of $V(\lambda)$, where ${\mathbf B}$ is the basis of $U_{(i)}^{-}$ given in Example \ref{ex:iso} and Example \ref{ex:non-iso}. 
}
\end{example}

\vskip 3mm 

\begin{proposition} \label{prop:imaginary}  

{\rm Let $\lambda \in P^{+}$ be a dominant integral weight and let $V(\lambda)=U_{q}(\g)  v_{\lambda}$ be the irreducible highest weight $U_{q}(\g)$-module with highest weight $\lambda$ and highest weight vector $v_{\lambda}$. For $i \in I^{\text{im}}$ and $\mu \in \text{wt}(V(\lambda))$, the following statements hold. 

\vskip 2mm
\begin{enumerate}

\item[(a)] $\langle h_{i}, \mu \rangle \ge 0$. 

\item[(b)] If $\langle h_{i}, \mu \rangle  =0$, then $V(\lambda)_{\mu - l \alpha_{i}} =0$ \, for all $l>0$.

\item[(c)] If $\langle h_{i}, \mu \rangle  =0$, then $f_{il}(V(\lambda)_{\mu})=0$.

\item[(d)] If $\langle h_{i} , \mu \rangle \le -l a_{ii}$, then $e_{il}(V(\lambda)_{\mu})=0$. 
\end{enumerate}}
\end{proposition}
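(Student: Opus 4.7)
The plan is to establish (a) and (b) directly from the highest weight structure together with Proposition \ref{prop:hw}(b), and then to deduce (c) and (d) as easy corollaries.

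For (a), I would write any weight of $V(\lambda)$ as $\mu = \lambda - \beta$ with $\beta = \sum_{j} k_j \alpha_j \in Q_+$ (using that $V(\lambda)$ is a highest weight module with highest weight $\lambda$) and pair with $h_i$: $\langle h_i, \mu \rangle = \langle h_i, \lambda \rangle - \sum_j k_j a_{ij}$. Because $\lambda \in P^+$ gives $\langle h_i, \lambda \rangle \ge 0$, while $a_{ij} \le 0$ for every $j \in I$ (recall $a_{ii} \le 0$ since $i \in I^{\text{im}}$) and $k_j \ge 0$, every term on the right is $\ge 0$, which yields (a).

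The proof of (b) exploits the same coroot computation more finely. The condition $\langle h_i, \mu \rangle = 0$ together with the bounds used in (a) forces both $\langle h_i, \lambda \rangle = 0$ and $k_j a_{ij} = 0$ for every $j$; in particular $\beta$ is supported only on indices $j \in I$ with $a_{ij} = 0$. By the triangular decomposition (Theorem \ref{thm:triangular}), $V(\lambda)_{\mu - l\alpha_i}$ is spanned by monomials $f_{j_1, m_1} \cdots f_{j_r, m_r} v_\lambda$ of total degree $-(\beta + l\alpha_i)$, and linear independence of the simple roots forces every index $j_s$ to lie in the support of $\beta + l\alpha_i$. Since $l > 0$ the index $i$ itself must appear; let $s^{*}$ denote its rightmost occurrence. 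For every $s > s^{*}$ we then have $j_s \ne i$ and $a_{i j_s} = 0$, so the factor $f_{j_s, m_s}$ commutes with $f_{i, m_{s^{*}}}$ by the last relation in \eqref{eq:Uhat}. Sliding $f_{i, m_{s^{*}}}$ to the far right reduces the monomial to $(\cdots)\cdot f_{i, m_{s^{*}}} v_\lambda$, which vanishes by Proposition \ref{prop:hw}(b) (applicable precisely because $\langle h_i, \lambda \rangle = 0$).

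Part (c) is then immediate, since $f_{il}$ sends $V(\lambda)_\mu$ into $V(\lambda)_{\mu - l\alpha_i} = 0$. For (d), the hypothesis $\langle h_i, \mu \rangle \le -l a_{ii}$ gives $\langle h_i, \mu + l\alpha_i \rangle \le 0$, whereas (a) forces this to be $\ge 0$ whenever $V(\lambda)_{\mu + l\alpha_i} \ne 0$. Thus either that weight space is already zero (and $e_{il}$ annihilates $V(\lambda)_\mu$ for free), or $\langle h_i, \mu + l\alpha_i \rangle = 0$ and (b) applied to the weight $\mu + l\alpha_i$ with shift $l\alpha_i$ forces $V(\lambda)_\mu$ itself to vanish. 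Either way $e_{il}(V(\lambda)_\mu) = 0$. The entire proof hinges on the commutativity step in (b); the key insight is that the wall condition $\langle h_i, \mu \rangle = 0$ collapses the support of $\beta$ to simple roots commuting with $\alpha_i$, which is exactly what is needed to unlock Proposition \ref{prop:hw}(b).
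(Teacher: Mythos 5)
Your proof is correct and follows essentially the same route as the paper's. In part (b) you are a bit more careful than the paper: you explicitly observe that every monomial spanning $V(\lambda)_{\mu - l\alpha_i}$ must contain an $f_{i,\ast}$ factor which, because all the other indices appearing have $a_{ij}=0$, can be commuted to the right and made to kill $v_{\lambda}$ via Proposition~\ref{prop:hw}(b), whereas the paper's printed argument for (b) only verifies that $f_{il}$ annihilates monomials of weight $\mu$ --- which is really statement (c) --- and leaves this spanning/reordering step implicit.
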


\begin{proof} \ (a) Since $\mu \in \text{wt}(V(\lambda))$, $\mu = \lambda - \beta$ for some $\beta \in Q_{+}$. Write $\beta =l_{1} \alpha_{i_1} + \cdots + l_{r} \alpha_{i_r}$. Then  since $a_{ij} \le 0$ for all $j \in I$, we have
$$\langle h_{i}, \mu \rangle = \langle h_{i}, \lambda \rangle - \langle h_{i}, \beta \rangle
= \langle h_{i}, \lambda \rangle -(l_{1} a_{i, i_1} + \cdots + l_{r} a_{i, i_r}) \ge 0. $$

\vskip 2mm 

(b) If $\langle h_{i}, \mu \rangle =0$, then $\langle h_{i}, \lambda \rangle =0$ and $a_{i, i_k}=0$  for all $k=1, \ldots, r$.
Let $u =f_{i_1, l_1} \cdots f_{i_r, l_r} $ be a monomial such that $u \, v_{\lambda} \in V(\lambda)_{\mu}$. Since $a_{i, i_k}=0$ for all $k=1, \ldots, r$, by the defining relation of quantum Borcherds-Bozec algebras, $f_{il}\, f_{i_k,l_k} = f_{i_k, l_k} \, f_{il} $. 
Hence $f_{il} u  \, v_{\lambda} = u \, f_{il} \, v_{\lambda} =0$ by Proposition  \ref{prop:hw}\,(b). 

\vskip 2mm 

(c) Our statement follows immediately from (b).

\vskip 2mm 

(d) Suppose $e_{il}(V(\lambda)_{\mu} \neq 0$. Then $\mu + l \alpha_{i} \in \text{wt}(V(\lambda))$ and by (a)
$$ 0 \le \langle h_{i}, \mu + l \alpha_{i} \rangle  = \langle h_{i} , \mu \rangle +  l a_{ii} \le 0, $$which implies $\langle h_{i}, \mu + l \alpha_{i} \rangle =0$. Then by (b), $\mu = (\mu+ l \alpha_{i}) - l \alpha_{i}$ would not be a weight of $V(\lambda)$, which is a contradiction. Hence $e_{il}(V(\lambda)_{\mu}  =0$.
\end{proof}

\begin{remark}
There is a sign error in \cite[Proposition 2.2 (b)]{K2019}.
\end{remark}

\vskip 10mm

\section{The category ${\mathcal O}_{\rm int}$}

\vskip 3mm

We introduce the notion of integrable representations.

\begin{definition}\label{def:Oint}
The {\it category ${\mathcal O}_{\text{\rm int}}$} consists of
$U_{q}(\g)$-modules $M$ such that
\begin{itemize}
\item [(i)] $M$ has a weight space decomposition $M=\bigoplus_{\mu
\in P} M_{\mu}$ with $\dim_{\Q(q)} M_{\mu} < \infty$ for all $\mu \in
P$.

\item[(ii)] there exist finitely many weights $\lambda_1, \ldots,
\lambda_s \in P$ such that $$\text{wt}(M) \subset \bigcup_{j=1}^s
(\lambda_j - Q_{+}),$$

\item[(iii)] if $i \in I^{\text{re}}$, $f_{i}$ is locally nilpotent
on $M$,

\item[(iv)] if $i \in I^{\text{im}}$, we have $\langle h_i, \mu
\rangle \ge 0$ for all $\mu \in \text{wt}(M)$,

\item[(v)] if $i \in I^{\text{im}}$ and $\langle h_{i}, \mu \rangle
=0$, then $f_{il}(M_{\mu})=0$ for all $l \in \Z_{>0}$,

\item[(vi)] if $i \in I^{\text{im}}$ and $\langle h_i, \mu \rangle
\le - l a_{ii}$, then $e_{il}(M_{\mu})=0$ for all $l \in \Z_{>0}$.
\end{itemize}
\end{definition}

\vskip 3mm

\begin{remark}\hfill 
{\rm
 
\begin{enumerate}

\item[(a)] By (ii), $e_{il}$ $((i,l)\in I^{\infty})$ are locally nilpotent.

\item[(b)] If $i \in I^{\text{im}}$,  $f_{il}$  are not necessarily locally
nilpotent.

\item[(c)] The irreducible highest weight $U_{q}(\g)$-module
$V(\lambda)$ with $\lambda \in P^{+}$ belongs to the category
${\mathcal O}_{\text{int}}$.

\item[(d)] A submodule or a quotient module of a $U_{q}(\g)$-module in  the category 
${\mathcal O}_{\text{int}}$ is again an object of ${\mathcal O}_{\text{int}}$.

\item[(e)] A finite number of direct sums or a finite number of tensor products of $U_{q}(\g)$-modules in the 
category ${\mathcal O}_{\text{int}}$ is again an object of ${\mathcal O}_{\text{int}}$.
\end{enumerate}}
\end{remark}

\vskip 3mm

Now we would like to give a character formula for $V(\lambda)$ with $\lambda \in P^{+}$. For this purpose, we need some preparation \cite{BSV2016}

\vskip 3mm

For a dominant integral weight $\lambda \in P^{+}$, let $F_{\lambda}$ be the set of elements
of the form $s= \sum_{k=1}^r l_k \alpha_{i_k}$ $(r \ge 0)$ such that
\begin{itemize}
\item[(i)] $i_{k} \in I^{\text{im}}$, $l_{k} \in \Z_{>0}$ for all $1 \le
k \le r$,

\item[(ii)] $(\alpha_{i_p}, \alpha_{i_q}) =0$ for all $1 \le p, q \le
r $,

\item[(iii)] $(\alpha_{i_k}, \lambda) =0$ for all $1 \le k \le r$.
\end{itemize}

\noindent When $r=0$, we understand $s=0$.

\vskip 3mm

For $s=\sum s_k \alpha_{i_k} \in F_{\lambda}$, we define
\begin{equation} \label{eq:sign}
\begin{aligned}
 d_{i}(s) & = \begin{cases} \# \{k \mid i_k =i \} & \text{if} \ i
\notin I^{\text{iso}}, \\
\sum_{i_k=i} s_k & \text{if} \ i \in I^{\text{iso}},
\end{cases} \\
\epsilon(s) & = \prod_{i \notin I^{\text{iso}}} (-1)^{d_i(s)}
\prod_{i \in I^{\text{iso}}} \phi(d_{i}(s))\\
& = (-1)^{\#(\text{supp}(s) \cap I \setminus I^{\text{iso}})}
\prod_{i \in I^{\text{iso}}} \phi(d_{i}(s)),
\end{aligned}
\end{equation}
where $\phi(n)$ are given by $\prod_{k=1}^{\infty} (1-q^k) = \sum_{n\ge 0} \phi(n) q^n.$

\vskip 3mm

Define
\begin{equation} \label{eq:slambda}
S_{\lambda} = \sum_{s \in F_{\lambda}} \epsilon(s) e^{-s}.
\end{equation}

\noindent
If $F_{\lambda} = \{0\}$, we understand $S_{\lambda}=1$. 

\vskip 3mm

\begin{example} \label{ex:rank1} \hfill

\vskip 2mm

{\rm  Let $i \in I^{\text{im}}$ and consider the quantum string algebra $U_{(i)}$. 

\vskip 2mm

(a) If $\langle h_{i}, \lambda \rangle >0$, then $F_{\lambda} =\{0\}$ and $S_{\lambda} =1$. 

\vskip 2mm 

(b) if $ i \in I^{\text{iso}}$ and $\langle h_{i}, \lambda \rangle =0$, then $\lambda =0$, $F_{0} = \{ l \alpha_{i} \mid l \ge 0 \}$ and $d_{i}(l \alpha_{i}) = l$. Hence
$$S_{0}=\sum_{l \ge 0} \epsilon(l \alpha_i) \, e^{-l\alpha_i} = \sum_{l
\ge 0} \phi(l) \, e^{-l \alpha_i} = \prod_{k=1}^{\infty} (1- e^{-k
\alpha_{i}}).$$

\vskip 2mm 

(c) If $i \in I^{\text{im}} \setminus I^{\text{iso}}$ and $\langle h_{i}, \lambda \rangle =0$, then $\lambda =0$, $F_{0} = \{ l \alpha_{i} \mid l \ge 0 \}$ and
\begin{equation*}
 d_i (l \alpha_i) = \begin{cases} 0 & \text{if} \ l=0, \\
1 & \text{if} \ l \ge 1,
\end{cases} \qquad 
\epsilon(l \alpha_i) = \begin{cases} 1 & \text{if} \ l=0, \\
-1 & \text{if} \ l \ge 1,
\end{cases}
\end{equation*}
which implies
$$S_{0} = 1 - (e^{-\alpha_i} + e^{-2 \alpha_i} + \cdots ) = 1 -
e^{-\alpha_i} \dfrac{1}{1-e^{-\alpha_i}} = \dfrac{1-2
e^{-\alpha_i}}{1-e^{-\alpha_i}}. $$}
\end{example}

\vskip 3mm 

Choose a linear functional $\rho$ on $\h$ such that $\langle h_i,
\rho \rangle =1$ for all $i\in I$. For each $w \in W$, we denote by
$l(w)$ the length of $w$ and set $\epsilon(w)= (-1)^{l(w)}$.
The following proposition gives a character formula for $V(\lambda)$.

\vskip 3mm 

\begin{proposition} \label{prop:character} \cite{BSV2016} \
{\rm Let $V=U_{q}(\g)\, v_{\lambda}$ be a highest weight $U_{q}(\g)$-module with
highest weight $\lambda \in P^{+}$. If $V$ satisfies the conditions 
in Proposition \ref{prop:hw}, then the character of $V$ is given by the
following formula\,:

\begin{equation} \label{eq:character}
\begin{aligned}
\text{ch} V &= \dfrac{\sum_{w \in W} \epsilon (w) e^{w(\lambda +
\rho)-\rho} w (S_{\lambda})}{\prod_{\alpha \in \Delta_{+}} (1-
e^{-\alpha})^{\dim \g_{\alpha}}} \\
&= \dfrac{\sum_{w \in W} \sum_{s \in F_{\lambda}} \epsilon(w)
\epsilon(s) e^{w(\lambda + \rho -s) - \rho}}{\prod_{\alpha \in
\Delta_{+}} (1-e^{-\alpha})^{\dim \g_{\alpha}}}.
\end{aligned}
\end{equation}

\vskip 1mm

In particular, the character of $V(\lambda)$ is given by this formula. 
}
\end{proposition}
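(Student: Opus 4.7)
The plan is to reduce the statement to the irreducible module $V(\lambda)$ and then adapt the Weyl--Kac--Borcherds character formula machinery in the spirit of \cite{BSV2016}.

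First, introduce the \emph{reduced Verma module} $M'(\lambda) := M(\lambda)/N(\lambda)$, where $N(\lambda)$ is the submodule generated by $f_i^{\langle h_i,\lambda\rangle+1}v_\lambda$ for $i \in I^{\text{re}}$ together with $f_{ik}v_\lambda$ for every $k>0$ whenever $i \in I^{\text{im}}$ and $\langle h_i,\lambda\rangle=0$. By Proposition \ref{prop:hw} any $V$ satisfying the hypotheses fits into a chain of surjections $M(\lambda) \twoheadrightarrow M'(\lambda) \twoheadrightarrow V \twoheadrightarrow V(\lambda)$, so it suffices to prove $\text{ch}\, M'(\lambda) = \text{ch}\, V(\lambda)$ and that this common character matches the right-hand side of \eqref{eq:character}.

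The main step is to compute $\text{ch}\, M'(\lambda)$ by an Euler characteristic / inclusion-exclusion argument. The triangular decomposition (Theorem \ref{thm:triangular}) gives $\text{ch}\, M(\lambda) = e^\lambda / \prod_{\alpha \in \Delta_+}(1-e^{-\alpha})^{\dim \g_\alpha}$. For each $i \in I^{\text{re}}$, the primitive vector $f_i^{\langle h_i,\lambda\rangle+1}v_\lambda$ generates a Verma submodule whose highest weight is $r_i(\lambda+\rho)-\rho$; iterating inclusion-exclusion over $W$ produces the alternating Weyl-group sum $\sum_w \epsilon(w)e^{w(\lambda+\rho)-\rho}$ in the numerator. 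For each $i \in I^{\text{im}}$ with $\langle h_i,\lambda\rangle=0$, every $f_{ik}v_\lambda$ is primitive; a family of such imaginary generators can be killed simultaneously and independently precisely when the corresponding roots are pairwise orthogonal and orthogonal to $\lambda$, i.e.\ when they form an element of $F_\lambda$. Inclusion-exclusion over these subsets yields the factor $S_\lambda$; the coefficient $\phi(d_i(s))$ appears in the isotropic case because, by Example \ref{ex:iso}, the rank-one negative subalgebra $U_{(i)}^-$ has a partition basis $\{f_{i,\mathbf{c}}\}$, so killing all $f_{ik}v_\lambda$ removes $\phi(l)$ weight-$(\lambda-l\alpha_i)$ vectors.

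The principal obstacle is proving $M'(\lambda)=V(\lambda)$, i.e.\ that no further primitive vectors appear. My preferred route is to extend $(\,,\,)_L$ of Proposition \ref{L-bilinear} via the involution $\omega$ of \eqref{eq:involution} to a contravariant Shapovalov-type form on $M(\lambda)$ and show that for $\lambda \in P^+$ its radical is exactly $N(\lambda)$; non-degeneracy on the quotient $M'(\lambda)$ then forces irreducibility. Alternatively, a quantum Casimir argument shows that any extra primitive vector of weight $\lambda - \gamma$ would have to satisfy $(\gamma,2(\lambda+\rho)-\gamma)=0$, which for $\lambda \in P^+$ restricts $\gamma$ to the $\Z_{\ge 0}$-span of $F_\lambda$---already accounted for by the $S_\lambda$ correction in the inclusion-exclusion.
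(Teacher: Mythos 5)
Your proposal takes a genuinely different route from the paper. The paper's entire proof is a one-line observation: the proof of \cite[Theorem~6.1]{BSV2016}, which establishes the character formula for $V(\lambda)$, uses only the vanishing conditions in Proposition~\ref{prop:hw} and nowhere invokes irreducibility, so it applies verbatim to any highest weight module $V$ satisfying those conditions. You instead set out to \emph{re-derive} the BSV character formula from scratch. Your opening reduction is a sound way to make the paper's implicit squeeze explicit: with $M'(\lambda)=M(\lambda)/N(\lambda)$, any such $V$ sits between $M'(\lambda)$ and $V(\lambda)$, so $\text{ch}\,M'(\lambda)=\text{ch}\,V(\lambda)$ would force all three characters to coincide.

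The problem is that the two remaining claims are exactly the content of \cite[Theorem~6.1]{BSV2016}, and your sketch leaves them genuinely open. The inclusion-exclusion over $W$ and over $F_\lambda$ is heuristic as written: the Verma submodules generated by $f_i^{\langle h_i,\lambda\rangle+1}v_\lambda$ and by the imaginary primitive vectors $f_{ik}v_\lambda$, together with their $W$-translates, do not intersect in a way that admits a naive alternating sum, and the actual argument (in Borcherds' proof and in \cite{BSV2016}) runs through a Casimir constraint on primitive weights combined with $W$-invariance of the numerator, with the $\phi(d_i(s))$ factor in $S_\lambda$ emerging from that analysis rather than from a direct count of killed vectors. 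Moreover, the irreducibility $M'(\lambda)=V(\lambda)$, which you correctly single out as the principal obstacle, is equivalent to the very character identity you are after; proposing a Shapovalov radical computation or a quantum Casimir argument without carrying either out leaves the proof incomplete. In short, the paper avoids all of this by outsourcing to \cite{BSV2016}; your proposal reopens that theorem but does not close it.
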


\vskip 2mm

\begin{proof}
The proof given in \cite[Theorem 6.1]{BSV2016} depends only on the conditions in
Proposition \ref{prop:hw},  not on the irreducibility of $V$. Hence their argument  works for any
highest weight $U_{q}(\g)$-module with highest weight $\lambda \in P^{+}$
satisfying the conditions in Proposition \ref{prop:hw}.
\end{proof}

\vskip 3mm

\begin{remark} \
{\rm Here, $\g_{\alpha}$ $(\alpha \in \Delta_{+})$ denotes the root space of the Borcherds-Bozec algebra $\g$ associated with the same Borcherds-Cartan datum as $U_{q}(\g)$. }
\end{remark}
 
\vskip 3mm

\begin{corollary} \
{\rm Let $V=U_{q}(\g) v_{\lambda}$ be a highest weight module with highest weight $\lambda \in P^{+}$. If $V$ satisfies the conditions in Proposition \ref{prop:hw}, then $V \cong V(\lambda)$. 
}
\end{corollary}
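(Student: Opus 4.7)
The plan is to produce a natural surjection from $V$ onto $V(\lambda)$ and then force it to be an isomorphism by comparing characters.

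First I would use that $V$ is a highest weight module with highest weight $\lambda$. By Proposition \ref{prop:Verma}(d), there is a surjective $U_{q}(\g)$-homomorphism $\pi_V : M(\lambda) \twoheadrightarrow V$ sending the canonical cyclic generator of $M(\lambda)$ to the highest weight vector $v_{\lambda}$ of $V$. Since $V \neq 0$, the kernel $\ker \pi_V$ is a proper submodule of $M(\lambda)$, hence is contained in the unique maximal proper submodule $R(\lambda)$ provided by Proposition \ref{prop:Verma}(b). Consequently $\pi_V$ factors through $M(\lambda)/R(\lambda) = V(\lambda)$, yielding a surjective $U_{q}(\g)$-homomorphism $\phi : V \twoheadrightarrow V(\lambda)$ that sends $v_{\lambda}$ to $v_{\lambda}$.

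Next I would compare characters. By hypothesis, $V$ satisfies the conditions of Proposition \ref{prop:hw}; by Proposition \ref{prop:hw} itself, $V(\lambda)$ also satisfies them. Hence Proposition \ref{prop:character} applies to both modules and delivers the same closed formula \eqref{eq:character} for their characters. In particular $\dim_{\Q(q)} V_{\mu} = \dim_{\Q(q)} V(\lambda)_{\mu}$ for every $\mu \in P$, and each such dimension is finite.

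Finally, since $\phi$ is $U^{0}$-equivariant it preserves weight spaces, so it restricts to surjections $\phi_{\mu} : V_{\mu} \twoheadrightarrow V(\lambda)_{\mu}$ between finite-dimensional $\Q(q)$-vector spaces of equal dimension. Each $\phi_{\mu}$ is therefore an isomorphism, and assembling over all $\mu \in P$ gives $\phi : V \xrightarrow{\sim} V(\lambda)$. I do not anticipate any real obstacle: the corollary is essentially a formal consequence of Propositions \ref{prop:Verma}, \ref{prop:hw}, and \ref{prop:character}, with all of the substantive content already packaged into the character formula.
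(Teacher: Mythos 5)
Your argument is correct and spells out what the paper leaves implicit when it calls the corollary an immediate consequence of Proposition \ref{prop:character}: both $V$ and $V(\lambda)$ satisfy the hypotheses of that proposition, so they share the same character, and the canonical surjection $V \twoheadrightarrow V(\lambda)$ is then an isomorphism on each finite-dimensional weight space. One small wording slip: since $\ker\bigl(M(\lambda)\twoheadrightarrow V\bigr)\subseteq R(\lambda)$, it is the projection $M(\lambda)\twoheadrightarrow V(\lambda)$ that factors through $V$, not $\pi_V$ through $V(\lambda)$, but the resulting map $\phi:V\twoheadrightarrow V(\lambda)$ is exactly the one you use.
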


\begin{proof} \, This is an immediate consequence of Proposition \ref{prop:character}. 
\end{proof}

\vskip 3mm 

\begin{corollary} \label{cor:simple} \hfill

\vskip 1mm 

{\rm
\begin{enumerate}
\item[(a)] Let $V=U_{q}(\g) v_{\lambda}$ be a highest weight module with highest weight $\lambda \in P$. If $V$ is an object in the category ${\mathcal O}_{\text{int}}$, then $\lambda \in P^{+}$ and $V \cong V(\lambda)$.

\vskip 2mm 

\item[(b)] Every simple object in the category ${\mathcal O}_{\text{int}}$ is isomorphic to some $V(\lambda)$ with $\lambda \in P^{+}$.
\end{enumerate}
}
\end{corollary}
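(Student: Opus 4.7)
The plan is to reduce both parts to the corollary immediately preceding cor:simple, which asserts that a highest weight module $V = U_{q}(\g)\,v_{\lambda}$ with $\lambda \in P^{+}$ satisfying the two conditions of Proposition~\ref{prop:hw} is isomorphic to $V(\lambda)$. Hence for part~(a) I must show $\lambda \in P^{+}$ and verify the hypotheses of Proposition~\ref{prop:hw}; for part~(b) I must produce a maximal vector inside a simple object of $\mathcal{O}_{\text{int}}$ and invoke~(a).

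For part~(a), fix $i \in I$. If $i \in I^{\text{re}}$, Definition~\ref{def:Oint}(iii) gives $f_{i}^{N+1} v_{\lambda} = 0$ for some minimal $N \ge 0$. Using $e_{i} v_{\lambda} = 0$, $K_{i} v_{\lambda} = q_{i}^{\langle h_{i}, \lambda\rangle} v_{\lambda}$, and the standard $U_{q_{i}}(sl_{2})$ commutator identity for $[e_{i}, f_{i}^{n}]$, one obtains
\[
0 \,=\, e_{i}\, f_{i}^{N+1} v_{\lambda} \,=\, [N+1]_{i}\,\frac{q_{i}^{\langle h_{i},\lambda\rangle - N} \,-\, q_{i}^{N - \langle h_{i},\lambda\rangle}}{q_{i} \,-\, q_{i}^{-1}}\; f_{i}^{N} v_{\lambda}.
\]
Since $f_{i}^{N} v_{\lambda} \ne 0$, this forces $\langle h_{i}, \lambda \rangle = N \in \Z_{\ge 0}$, simultaneously establishing condition~(a) of Proposition~\ref{prop:hw}. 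If $i \in I^{\text{im}}$, Definition~\ref{def:Oint}(iv) at $\mu = \lambda$ yields $\langle h_{i}, \lambda \rangle \ge 0$, and when $\langle h_{i}, \lambda \rangle = 0$ Definition~\ref{def:Oint}(v) at $\mu = \lambda$ yields $f_{il}\,v_{\lambda} = 0$ for all $l > 0$, which is condition~(b) of Proposition~\ref{prop:hw}. Therefore $\lambda \in P^{+}$ and the hypotheses of the preceding corollary hold, giving $V \cong V(\lambda)$.

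For part~(b), let $M$ be a simple object of $\mathcal{O}_{\text{int}}$. Since $M \ne 0$, pick $\mu_{0} \in \text{wt}(M)$. For any $\mu \in \text{wt}(M)$ with $\mu \ge \mu_{0}$, Definition~\ref{def:Oint}(ii) provides some $\lambda_{j}$ with $\mu \le \lambda_{j}$, whence $\lambda_{j} \ge \mu_{0}$ and $\text{ht}(\mu - \mu_{0}) \le \text{ht}(\lambda_{j} - \mu_{0})$. Since only finitely many $\lambda_{j}$'s occur, these heights are uniformly bounded, so a maximal weight $\lambda$ above $\mu_{0}$ exists. Take $0 \ne v \in M_{\lambda}$. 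For any $(i,l) \in I^{\infty}$ we have $e_{il} v \in M_{\lambda + l\alpha_{i}}$; but $\lambda + l\alpha_{i} > \lambda$ cannot be a weight of $M$ by maximality, so $e_{il} v = 0$. Hence $v$ is a maximal vector, the nonzero submodule $U_{q}(\g)\,v$ is a highest weight module with highest weight $\lambda$, simplicity forces $M = U_{q}(\g)\,v$, and part~(a) yields $\lambda \in P^{+}$ and $M \cong V(\lambda)$.

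The only genuinely technical ingredient is the $U_{q_{i}}(sl_{2})$-computation in part~(a), whose key input is the classical identity $[e_{i}, f_{i}^{n}] = [n]_{i}\,(q_{i}^{-(n-1)} K_{i} - q_{i}^{n-1} K_{i}^{-1})/(q_{i} - q_{i}^{-1})\; f_{i}^{n-1}$; this is of the same nature as the argument invoked in the proof of Proposition~\ref{prop:hw}(a). Everything else amounts to bookkeeping with the partial order on weights and the defining conditions of $\mathcal{O}_{\text{int}}$.
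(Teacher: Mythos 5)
Your proposal is correct and follows essentially the same route as the paper: part~(a) reduces to the preceding corollary by verifying the hypotheses of Proposition~\ref{prop:hw} — using standard $U_{q_i}(sl_2)$-theory for real indices and Definition~\ref{def:Oint}(iv)--(v) for imaginary ones — and part~(b) produces a maximal vector and invokes~(a). The paper merely states these steps tersely (``by the standard $U_q(sl_2)$-theory\ldots'', ``there is at least one maximal vector\ldots''), whereas you spell out the commutator computation and the height-bound argument for the existence of a maximal weight.
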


\begin{proof} \ (a) Suppose that $V$ is an object in ${\mathcal O}_{\text{int}}$. If $i \in  I^{\text{re}}$, $f_{i}$ is locally nilpotent on $V$ and by the standard $U_{q}(sl_{2})$-theory, we have $\langle h_{i}, \lambda \rangle \ge 0$, \, $f_{i}^{\langle h_{i}, \lambda \rangle +1} v_{\lambda} =0$. 

\vskip 2mm 

If $i \in I^{\text{im}}$, by the condition (iv) in Definition \ref{def:Oint}, we have $\langle h_{i}, \lambda \rangle \ge 0$ and $\langle h_{i}, \lambda \rangle =0$ implies $f_{il} \, v_{\lambda} =0$ for all $(i,l) \in I^{\infty}$. 

\vskip 2mm 

Hence $\lambda \in P^{+}$ and $V$ satisfies the conditions in Proposition \ref{prop:hw}, which proves our claim. 

\vskip 2mm (b)  Let $V$ be an irreducible $U_{q}(\g)$-module in the category ${\mathcal O}_{\text{int}}$. Then $V$ must be a highest weight module because by Definition \ref{def:Oint}, there is at least one maximal vector in $V$ and any maximal vector would generate a highest weight submodule. By (a), $V \cong V(\lambda)$ for some $\lambda \in P^{+}$.   
\end{proof}

\vskip 3mm 

We will now prove that every $U_{q}(\g)$-module in the category ${\mathcal O}_{\text{int}}$ is completely reducible following the outline given in \cite{HK02}  and \cite{JKK05}. 

\vskip 3mm 

Define an  {\it anti}-involution $\varphi: U_{q}(\g) \rightarrow U_{q}(\g)$ by 
$$ e_{il} \longmapsto f_{il}, \quad f_{il} \longmapsto e_{il}, \quad q^{h} \longmapsto q^{-h} \ \ \text{for all} \ (i,l) \in I^{\infty}, \ h \in P^{\vee}.$$

\vskip 1mm

Let $M=\bigoplus_{\mu \in P} M_{\mu}$ be a $U_{q}(\g)$-module in the category ${\mathcal O}_{\text{int}}$ and set 
$$M^{*}:=\bigoplus_{\mu \in P} M_{\mu}^{*}, \ \ \text{where} \ \ M_{\mu}^{*} = \text{Hom}_{\Q(q)}(M_{\mu}, \Q(q)).$$ 
We define a $U_{q}(\g)$-module structure on $M^{*}$ by
$$\langle x \, \psi, m \rangle := \langle \psi, \varphi(x) \, m \rangle \ \ \text{for} \ x \in U_{q}(\g), \, \psi \in M^{*}, \, m \in M.$$ 

\vskip 2mm 

\begin{lemma} \label{lem:Mdual} \ 
{\rm Let $M =\bigoplus_{\mu \in P} M_{\mu}$ be a $U_{q}(\g)$-module in the category ${\mathcal O}_{\text{int}}$.

\begin{enumerate}

\item[(a)] There is a canonical isomorphism $(M^{*})^{*} \cong M$ as $U_{q}(\g)$-modules. 

\item[(b)] $(M^{*})_{\mu} = M_{\mu}^{*}$ for all $\mu \in P$.

\item[(c)] $\text{wt}(M^{*}) = \text{wt}(M)$.

\item[(d)] $M^{*}$ is an object of ${\mathcal O}_{\text{int}}$.  
\end{enumerate}}
\end{lemma}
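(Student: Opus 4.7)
The plan is to establish the four parts in the order (b), (c), (a), (d), since each later claim builds on the earlier ones. The key foundation is (b), which identifies the summand $M_\mu^*$ in the defining decomposition of $M^*$ with the $\mu$-weight space under the new action; once this is done, the weight theory of $M^*$ is essentially that of $M$.

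For (b), I will pick $\psi\in M_\mu^*$ (regarded as a functional on $M$ vanishing off $M_\mu$) and directly compute $q^h\cdot\psi$ on an arbitrary weight vector $m\in M_\nu$ via $\langle q^h\psi,m\rangle=\langle\psi,\varphi(q^h)m\rangle$; reading off the eigenvalue of $q^h$ shows $\psi\in(M^*)_\mu$, and conversely any weight vector of weight $\mu$ in $M^*$ must lie in $M_\mu^*$ because the characters $h\mapsto q^{\langle h,\mu\rangle}$ separate weights. Part (c) is then immediate from (b) together with $\dim M_\mu^*=\dim M_\mu$. For (a), I will use the canonical evaluation map $\operatorname{ev}\colon M\to(M^*)^*$, $m\mapsto(\psi\mapsto\psi(m))$; it is $U_q(\g)$-linear because $\varphi^2=\mathrm{id}$ (checked on the generators $q^h$, $e_{il}$, $f_{il}$), and it restricts to an isomorphism on each weight space $M_\mu\to(M_\mu^*)^*$ by finite-dimensionality of $M_\mu$, so $\operatorname{ev}$ is a weight-preserving isomorphism.

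The bulk of the work is in (d), where I verify the six axioms of Definition \ref{def:Oint} in turn. Conditions (i) and (ii) follow at once from (b), (c), and the corresponding properties of $M$. Conditions (iv), (v), (vi) reduce to the analogous conditions on $M$ after a shift: for instance, if $\psi\in(M^*)_\mu=M_\mu^*$ with $i\in I^{\mathrm{im}}$ and $\langle h_i,\mu\rangle=0$, then $(f_{il}\psi)(m)=\psi(e_{il}m)$ vanishes for every $m$ because the only $M_\nu$ whose image under $e_{il}$ can pair nontrivially with $\psi$ is $\nu=\mu-l\alpha_i$, where $\langle h_i,\nu\rangle=-la_{ii}$ triggers axiom (vi) on $M$ and forces $e_{il}(M_\nu)=0$; symmetrically, axiom (vi) on $M^*$ is deduced by combining (iv) and (v) on $M$ applied at the shifted weight $\nu=\mu+l\alpha_i$.

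The main obstacle is axiom (iii), local nilpotence of $f_i$ on $M^*$ for $i\in I^{\mathrm{re}}$. The delicate point is that local nilpotence of $e_i$ on $M$ (guaranteed by the Remark following Definition \ref{def:Oint}) gives only a pointwise bound on $M$, whereas local nilpotence of $f_i$ on $M^*$ demands a single integer $n$, depending on $\psi$ but not on $m$, such that $(f_i^n\psi)(m)=\psi(e_i^nm)=0$ for every $m\in M$. The plan is to exploit that every $\psi\in M^*=\bigoplus_\mu M_\mu^*$ has finite support $S\subset P$: for $(f_i^n\psi)(m)$ to be nonzero one needs a weight component of $m$ of weight $\nu\in\mathrm{wt}(M)$ with $\nu+n\alpha_i\in S$; axiom (ii) of Definition \ref{def:Oint} for $M$ (weights contained in a finite union $\bigcup_j(\lambda_j-Q_+)$) then forces $\lambda_j-\mu'+n\alpha_i\in Q_+$ for some $\mu'\in S$ and some $j$, bounding $n$ by a constant depending only on the finite sets $S$ and $\{\lambda_j\}$, which yields the required uniform nilpotence.
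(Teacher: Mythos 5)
Your treatment of parts (a), (b), (c) and of axioms (i), (ii), (iv), (v), (vi) in part (d) is correct and matches the route the paper takes; in particular the shift arguments for (v) and (vi), where you move from weight $\mu$ to $\mu - l\alpha_i$ (resp.\ $\mu + l\alpha_i$) and then invoke (vi) (resp.\ (iv) and (v)) on $M$, are exactly the paper's computation. The paper simply declares conditions (i)--(iv) to be ``clear'' and only writes out (v) and (vi); you go further and try to justify (iii), which is commendable, but the justification you give has a genuine gap.

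The problem is in the last step of your argument for local nilpotence of $f_i$ on $M^*$. You correctly reduce to showing that, for a fixed finite set $S$ (the support of $\psi$) and fixed $\mu' \in S$, one cannot have $\mu' - n\alpha_i \in \text{wt}(M)$ for arbitrarily large $n$, and you try to extract this from condition (ii), i.e.\ from the requirement $\lambda_j - \mu' + n\alpha_i \in Q_+$. But this inclusion does \emph{not} bound $n$ above: writing $\lambda_j - \mu' = \sum_k c_k \alpha_k$, the condition $\lambda_j - \mu' + n\alpha_i \in Q_+$ reads ``$c_k \ge 0$ for $k \neq i$ and $c_i + n \ge 0$,'' which, once satisfied for one $n$, is satisfied for all larger $n$. (Concretely, if $\mu' = \lambda_j$ the constraint is $n\alpha_i \in Q_+$, which holds for every $n \ge 0$.) Condition (ii) bounds weights from \emph{above} in the partial order, while subtracting $n\alpha_i$ moves \emph{down}, so no bound on $n$ results.

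To repair this one must genuinely use condition (iii) for $M$. Since $e_i$ and $f_i$ are both locally nilpotent on $M$, each weight vector of $M$ lies in a finite-dimensional $U_{(i)} \cong U_q(sl_2)$-submodule, and one can decompose $M$ as a $P$-graded direct sum of finite-dimensional irreducible $U_{(i)}$-modules; because the restricted dual of such a direct sum (taken along the finite-dimensional $P$-weight spaces) is again the direct sum of the finite-dimensional duals, $M^*$ inherits this decomposition and $f_i$ is locally nilpotent on it. Equivalently, one can argue directly: if $e_i^n m \neq 0$ for some $m \in M_{\mu' - n\alpha_i}$, then the $sl_2$-string containing $m$ has an $e_i$-highest weight vector of $P$-weight $\mu' + k\alpha_i$ with $k \ge n - \langle h_i, \mu' \rangle$, and it is this weight, moving \emph{up} by $k\alpha_i$, that eventually violates condition (ii). Either way, the $sl_2$-reflection is the missing ingredient in your sketch.
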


\vskip 2mm

\begin{proof} \, (a) They are canonically isomorphic as vector spaces. Hence it suffices to verify that the canonical linear map commutes with the $U_{q}(\g)$-module action, which is straightforward. 

\vskip 2mm

(b)  is clear and (c) follows from (b).

\vskip 2mm 

(d) Clearly, $M^{*}$ satisfies the conditions (i) - (iv) in Definition \ref{def:Oint}. Thus we will check the conditions (v) and (vi). 

\vskip 2mm 

Let $i \in I^{\text{im}}$, $\langle h_{i}, \mu \rangle =0$ and $\psi \in M_{\mu}^{*}$. Since $\text{wt}(f_{il}(M_{\mu}^{*}))=\mu - l \alpha_{i}$, we have only to check the condition (v) for the vectors $m \in M_{\mu - l \alpha_{i}}$. Since $\langle h_{i}, \mu - l \alpha_{i} \rangle = - l a_{ii}$, by the definition of $U_{q}(\g)$-module action on $M^{*}$ and the condition (vi) in Definition \ref{def:Oint} for $M$, we have 
$\langle f_{il} \, \psi, m \rangle = \langle \psi, e_{il} \, m \rangle =0,$ which verifies the condition (v) for $M^{*}$. 

\vskip 1mm

Similarly, suppose $i \in I^{\text{im}}$, $\langle h_{i}, \mu \rangle \le - l a_{ii}$ and let $\psi \in M_{\mu}^{*}$. Since $\text{wt}(e_{il}\, \psi) = \mu + l \alpha_{i}$, we take a non-zero vector $m \in M_{\mu + l \alpha_{i}}$. Then we must have $\langle h_{i}, \mu + l \alpha_{i} \rangle \ge 0$. On the other hand, $\langle h_{i}, \mu + l \alpha_{i} \rangle = \langle h_{i}, \mu \rangle + l a_{ii} \le 0$, which implies $\langle h_{i}, \mu + l \alpha_{i} \rangle =0$. Hence by the condition (v) in Definition \ref{def:Oint} for $M$, we have 
$$ \langle e_{il} \, \psi, m \rangle = \langle \psi, f_{il} \, m \rangle =0 \ \ \text{for all} \ l>0,$$ as desired. 
\end{proof}

\vskip 3mm

Let us proceed to prove the complete reducibility of $U_{q}(\g)$-modules in the category ${\mathcal O}_{\text{int}}$. Let $M$ be a $U_{q}(\g)$-module in the category ${\mathcal O}_{\text{int}}$ and let $v_{\lambda}$ be a maximal vector of weight $\lambda$ in $M$. Then the submodule $V$ generated by $v_{\lambda}$ is isomorphic to $V(\lambda)$ and $\lambda \in P^{+}$.  Take a linear functional $\psi_{\lambda} \in M_{\lambda}^{*}$ such that $\langle \psi_{\lambda}, v_{\lambda} \rangle =1$, $\langle \psi_{\lambda}, M_{\mu} \rangle =0$ for all $\mu \neq \lambda$. Then it is easy to  verify that $\psi_{\lambda}$ is a maximal  vector of weight $\lambda$ in $M^{*}$.   Hence the submodule $W:=U_{q}(\g) \, \psi_{\lambda}$ of $M^{*}$ is isomorphic to $V(\lambda)$. 

\vskip 3mm 

The following Lemma is a critical ingredient in proving our main result.  

\vskip 3mm

\begin{lemma} \label{lem:split} \ {\rm Let $M$ be a $U_{q}(\g)$-module in the category ${\mathcal O}_{\text{int}}$ and let $V$ be the submodule of $M$ generated by a maximal vector $v_{\lambda}$ of weight $\lambda$. Then we have 
$$M \cong V \oplus M\big / V.$$
}
\end{lemma}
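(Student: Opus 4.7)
The plan is to produce an explicit complementary submodule $N$ of $V$ in $M$ by duality. By Corollary \ref{cor:simple}(a) we have $V \cong V(\lambda)$ with $\lambda \in P^{+}$, and as noted in the paragraph preceding the lemma, $\psi_{\lambda} \in M^{*}$ is a maximal vector of weight $\lambda$ generating a submodule $W = U_{q}(\g)\psi_{\lambda} \cong V(\lambda)$ of $M^{*}$. I will decompose $M = V \oplus N$, where
\[
N := \{\, m \in M \mid \langle \psi, m \rangle = 0 \text{ for all } \psi \in W \,\};
\]
a short calculation using the twist by $\varphi$ shows that $N$ is a $U_{q}(\g)$-submodule of $M$, and the lemma then follows since $N \cong M/V$.

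The intersection $V \cap N$ is a submodule of the simple module $V \cong V(\lambda)$, so either $V \cap N = 0$ or $V \cap N = V$. The latter is impossible because $\langle \psi_{\lambda}, v_{\lambda} \rangle = 1$ forces $v_{\lambda} \notin N$, so $V \cap N = 0$.

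For $V + N = M$, consider the evaluation map $\iota : M \to W^{*}$ defined by $\iota(m)(\psi) := \langle \psi, m \rangle$. Using that $\varphi$ squares to the identity, one checks that $\iota$ is $U_{q}(\g)$-equivariant with kernel $N$, hence $\iota$ descends to an injection $M/N \hookrightarrow W^{*}$. The critical step is to identify $W^{*} \cong V(\lambda)$: by Lemma \ref{lem:Mdual}(d), $W^{*}$ lies in $\mathcal{O}_{\text{int}}$, and by Lemma \ref{lem:Mdual}(c), $\text{ch}(W^{*}) = \text{ch}(W) = \text{ch}(V(\lambda))$. The functional on $W$ that sends $\psi_{\lambda}$ to $1$ and vanishes on all other weight spaces is a maximal vector of weight $\lambda$ in $W^{*}$, so by Corollary \ref{cor:simple}(a) it generates a copy of $V(\lambda) \subseteq W^{*}$; comparing characters then forces equality, $W^{*} = V(\lambda)$. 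Hence $M/N$ is a non-zero submodule of the simple module $V(\lambda)$, giving $M/N \cong V(\lambda)$. The composition $V \hookrightarrow M \twoheadrightarrow M/N$ is then a non-zero homomorphism between two copies of $V(\lambda)$, hence an isomorphism by Schur's lemma, which yields $V + N = M$.

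The main obstacle I anticipate is establishing $W^{*} \cong V(\lambda)$, which relies on the character formula (Proposition \ref{prop:character}) and ultimately on Proposition \ref{prop:hw}. Once that identification is secured, the Schur-type argument produces the splitting $M = V \oplus N \cong V \oplus M/V$ immediately.
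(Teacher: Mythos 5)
Your proof is correct and follows essentially the same strategy as the paper's: both rely on the maximal vector $\psi_{\lambda} \in M^{*}$, the submodule $W = U_{q}(\g)\psi_{\lambda}$, the evaluation map $M \to W^{*}$, the identification $W^{*} \cong V(\lambda)$, and Schur's lemma. The only difference is packaging: you exhibit the complementary submodule $N = \ker(M \to W^{*})$ explicitly and verify $V \cap N = 0$ and $V + N = M$ directly, whereas the paper constructs a splitting map $p = (\eta\circ\iota)^{-1}\circ\eta : M \to V$ and observes that $p\circ\iota = \mathrm{id}_V$; your $N$ is precisely $\ker p$, so the two arguments produce the same decomposition.
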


\begin{proof} \ Consider the short exact sequence 
\begin{equation} \label{eq:split}
0 \longrightarrow V \overset{\iota} \hookrightarrow M \longrightarrow M \big / V \longrightarrow 0.
\end{equation}

\vskip 1mm

We need to prove the sequence \eqref{eq:split} splits. 

\vskip 3mm

Let $W = U_{q}(\g) \, \psi_{\lambda}$ be the submodule of $M^{*}$ described above. Take the dual of the inclusion $W \hookrightarrow M^{*}$ to get a homomorphism $M^{**} \rightarrow W^{*}$. Let  $\eta:M \overset{\sim} \longrightarrow  M^{**} \longrightarrow W^{*}$ be the composition of homomorphisms to obtain
$$\eta \circ \iota: V \hookrightarrow M \overset{\sim} \longrightarrow  M^{**} \longrightarrow W^{*}.$$

\vskip 1mm

Note that the image of the maximal  vector of $V$ is non-zero under the homomorphism $\eta \circ \iota$. Since $W^{*} \cong W \cong V(\lambda)  \cong V$, by Schur's Lemma, $\eta \circ \iota$ is an isomorphism. Hence we get a homomorphism  
$$(\eta \circ \iota)^{-1} \circ \eta : M \overset{\sim} \longrightarrow M^{**} \longrightarrow W^{*} \longrightarrow V.$$

\vskip 1mm 

Clearly, the composition of the homomorphisms 
$$V \overset{\iota} \hookrightarrow M \overset{\eta} \longrightarrow W^{*} \overset{(\eta \circ \iota)^{-1}} \longrightarrow V$$
is the identity and hence the exact sequence  \eqref{eq:split} splits. 
\end{proof}

\vskip 3mm

Now we prove that the category ${\mathcal O}_{\text{int}}$ is semi-simple. 

\vskip 3mm

\begin{theorem} \label{thm:semi-simple} \, {\rm
Every $U_{q}(\g)$-module in the category ${\mathcal O}_{\text{int}}$ is completely reducible.
}
\end{theorem}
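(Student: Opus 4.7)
The plan is to proceed by contradiction using Zorn's lemma, reducing the question to splitting a single short exact sequence, which I will handle by dualizing the situation and invoking Lemma \ref{lem:split}. Given $M \in {\mathcal O}_{\text{int}}$, Zorn's lemma applied to the family of collections $\{V_\alpha\}$ of simple submodules of $M$ whose sum is direct yields a maximal such family, and hence a semisimple submodule $N := \bigoplus_\alpha V_\alpha \subset M$. It suffices to show $N = M$.

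Suppose for contradiction $N \subsetneq M$. Then $M/N$ is a nonzero object of ${\mathcal O}_{\text{int}}$, and condition (ii) of Definition \ref{def:Oint} bounds its weight set above, so a maximal weight $\lambda \in \text{wt}(M/N)$ exists. Since no strictly higher weight of $M/N$ exists, every nonzero vector in $(M/N)_\lambda$ is killed by all $e_{il}$, hence is a maximal vector, and by Corollary \ref{cor:simple}(a) it generates a submodule of $M/N$ isomorphic to $V(\lambda)$ with $\lambda \in P^{+}$. Let $W \subset M$ be the preimage of this submodule, giving a short exact sequence
\[ 0 \to N \to W \to V(\lambda) \to 0 \]
in ${\mathcal O}_{\text{int}}$. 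Any splitting of this sequence exhibits a simple submodule of $M$ meeting $N$ trivially, which would contradict the maximality of $\{V_\alpha\}$, so it remains to construct such a splitting.

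The main obstacle is that Lemma \ref{lem:split} splits off simple \emph{submodules}, whereas here the simple module $V(\lambda)$ appears as a \emph{quotient}; the resolution is to dualize. By Lemma \ref{lem:Mdual}, $(-)^{*}$ is a contravariant functor on ${\mathcal O}_{\text{int}}$ with $(M^{*})^{*} \cong M$, and exactness (from the finite-dimensionality of weight spaces) produces
\[ 0 \to V(\lambda)^{*} \to W^{*} \to N^{*} \to 0. \]
Since $V(\lambda)^{*}$ is an irreducible module of highest weight $\lambda$, it is isomorphic to $V(\lambda)$, and sits inside $W^{*}$ as $(W/N)^{*} \subset W^{*}$, generated by a maximal vector dual to the chosen generator of $V(\lambda)$. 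Lemma \ref{lem:split} applied to $W^{*}$ then gives $W^{*} \cong V(\lambda) \oplus N^{*}$; dualizing once more and using $(M^{*})^{*} \cong M$ yields $W \cong V(\lambda) \oplus N$. Naturality of the double-dual isomorphism identifies the second summand with the original $N \subset W$, so the first summand is a simple submodule of $W \subset M$ disjoint from $N$, producing the required contradiction. Hence $N = M$ and $M$ is completely reducible.
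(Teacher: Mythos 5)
Your proof is correct, and it takes a genuinely different route from the paper's. The paper's proof works bottom-up: for each $v\in M$, it observes that $V(v):=U^{\ge 0}v$ is finite-dimensional (by condition~(ii) of Definition~\ref{def:Oint}), proves by induction on $\dim V(v)$ that $U_{q}(\g)v$ is completely reducible by repeatedly invoking Lemma~\ref{lem:split} to split off one $V(\lambda)$ at a time, and then concludes via the general fact that a sum of irreducible submodules is a direct sum. You instead work top-down with Zorn's lemma: you fix a maximal semisimple submodule $N$, locate a maximal weight $\lambda$ of $M/N$ (which exists because condition~(ii) bounds weights within finitely many cones and the height function is bounded there), pull back a copy of $V(\lambda)$ to a short exact sequence $0\to N\to W\to V(\lambda)\to 0$, and then --- since Lemma~\ref{lem:split} splits off simple \emph{submodules}, not simple quotients --- dualize so that $V(\lambda)^{*}\cong V(\lambda)$ becomes a submodule of $W^{*}$ generated by a maximal vector, apply the lemma there, and dualize back. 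This is a valid workaround, though somewhat indirect: Lemma~\ref{lem:split} is itself proved by a dualization argument, so your approach dualizes twice where the paper's cyclic-module induction avoids the quotient/submodule asymmetry altogether. The one step worth spelling out more carefully is the ``naturality'' claim that the $N$-summand of $W\cong V(\lambda)\oplus N$ is the original copy of $N$: from $W^{*}=V(\lambda)^{*}\oplus B$ with $V(\lambda)^{*}=\{\phi\in W^{*}:\phi|_{N}=0\}$, the annihilator of $V(\lambda)^{*}$ inside $W^{**}$ pulls back under the canonical isomorphism $W\cong W^{**}$ to $\{w\in W:\phi(w)=0\ \text{for all}\ \phi\ \text{with}\ \phi|_{N}=0\}$, which equals $N$ because one is working with the weight-graded dual and $N$ is a weight submodule; this makes the complementary summand a simple submodule of $M$ meeting $N$ trivially, completing the contradiction.
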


\vskip 1mm

\begin{proof} \, Let $M$ be a $U_{q}(\g)$-module in the category ${\mathcal O}_{\text{int}}$. We will prove our claim in two steps. 

\vskip 2mm

{\bf Step 1\,:} \ If $M = U_{q}(\g)\, V$ for some finite-dimensional $U^{\ge 0}$-submodule $V$, then $M$ is completely reducible.  

\vskip 2mm 

We will use induction on the dimension of $V$. If $V=0$, our claim is trivial. If $V \neq 0$, there 
 a maximal  vector $v$ in $V$ with a dominant integral weight $\lambda \in P^{+}$.  Then the submodule $W$ generared by $v$ is isomorphic to $V(\lambda)$ and by Lemma \ref{lem:split}, we have $M \cong W \oplus M \big / W$. Since $M \big / W \cong U_{q}(\g) (V \big / V \cap W)$ and  
$\dim_{\Q(q)} (V \big / V \cap W) < \dim_{\Q(q)} V$, $M\big / W$ is completely reducible. 

\vskip 3mm 

{\bf Step 2\,:} \ For every $v \in M$, set $V(v):= U^{\ge 0} \, v$, which is a finite-dimensional $U^{\ge 0}$-module due to the condition (ii) in Definition \ref{def:Oint}. By Step 1, $U_{q}(\g) \, v =U_{q}(\g)\,V(v)$ is completely reducible. 
Hence $M=\sum_{v \in M} U_{q}(\g) \, v$ is a sum of irreducible $U_{q}(\g)$-submodules. 
Now by a general argument on semi-simplicity \cite[Proposition 3.12]{CR}, a sum of irreducible submodules is a direct sum. 
Hence we conclude $M$ is completely reducible.  
\end{proof}

\vskip 3mm 

As an immediate consequence, we obtain the following corollary.

\vskip 2mm 

\begin{corollary} \label{cor:string} \, {\rm Let $M$ be a $U_{q}(\g)$-module in the category ${\mathcal O}_{\text{int}}$
and let $U_{(i)}$ be the quantum string subalgebra corresponding to $i \in I$. 

\vskip 2mm
\begin{enumerate}

\item[(a)] If $i \in I^{\text{re}}$, $M$ is isomorphic to a direct sum of finite-dimensional irreducible $U_{q}(sl_{2})$-modules. 


\item[(b)] If $i \in I^{\text{im}}$, $M$ is isomorphic to a direct sum of 1-dimensional trivial modules and infinite-dimensional irreducible highest weight modules over $U_{(i)}$. 
\end{enumerate}}
\end{corollary}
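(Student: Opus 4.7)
The plan is to reduce to the case of an irreducible highest weight module via Theorem \ref{thm:semi-simple} and then analyze the $U_{(i)}$-structure case by case. Applying Theorem \ref{thm:semi-simple} to $M$, one writes $M \cong \bigoplus_{\alpha} V(\lambda_{\alpha})$ with $\lambda_{\alpha} \in P^{+}$. Since restriction commutes with direct sums, it suffices to prove the statement for a single irreducible $V(\lambda)$ viewed as a $U_{(i)}$-module.

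For part (a), when $i \in I^{\text{re}}$, Example \ref{ex:real} identifies $U_{(i)} \cong U_{q}(sl_{2})$. The defining conditions of $\mathcal{O}_{\text{int}}$ give local nilpotence of $f_{i}$ by (iii), local nilpotence of $e_{i}$ by the remark following Definition \ref{def:Oint}, and integrality of the $K_{i}$-eigenvalues on each $P$-weight space. These are exactly the hypotheses needed for the quantum analog of Weyl's theorem for $U_{q}(sl_{2})$, and the argument of Theorem \ref{thm:semi-simple} specialized to rank one shows that $V(\lambda)|_{U_{(i)}}$ splits as a direct sum of finite-dimensional irreducible $U_{q}(sl_{2})$-modules.

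For part (b), when $i \in I^{\text{im}}$, the plan is to rerun the machinery of Section 5 with $U_{(i)}$ in place of $U_{q}(\g)$. The restriction $V(\lambda)|_{U_{(i)}}$ retains the $P$-weight decomposition with finite-dimensional components, and the conditions (iv), (v), (vi) of Definition \ref{def:Oint} are precisely the $i$-indexed hypotheses needed for the rank-one analogs of Proposition \ref{prop:hw}, Lemma \ref{lem:Mdual}, Lemma \ref{lem:split}, and Theorem \ref{thm:semi-simple}. These proofs transpose essentially verbatim because they only invoke relations involving the single index $i$. The resulting decomposition expresses $V(\lambda)|_{U_{(i)}}$ as a direct sum of irreducible highest weight $U_{(i)}$-modules $V(\mu)$, and Example \ref{ex:string} identifies each such $V(\mu)$ as either the $1$-dimensional trivial module (when $\langle h_{i}, \mu \rangle = 0$) or as an infinite-dimensional highest weight module (when $\langle h_{i}, \mu \rangle > 0$), which is precisely the dichotomy claimed.

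The main delicate point I anticipate is that the $K_{i}$-eigenspaces of $V(\lambda)$ need not be finite-dimensional, since a single $K_{i}$-eigenvalue can correspond to infinitely many $P$-weights. I would sidestep this by retaining the full $P$-grading on $V(\lambda)$ throughout the rank-one arguments, so that the finiteness invoked in the analog of Lemma \ref{lem:split} is that of each $V(\lambda)_{\mu}$ rather than of any $U_{(i)}$-weight space. With this modification in place, the rank-one version of Theorem \ref{thm:semi-simple} applies and the corollary follows.
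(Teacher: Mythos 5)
Your proof is correct and takes essentially the same approach as the paper: both rerun the Section 5 complete-reducibility machinery for the string subalgebra $U_{(i)}$ and then invoke Examples \ref{ex:real} and \ref{ex:string} to classify the simple summands. Your preliminary decomposition of $M$ over $U_q(\g)$ via Theorem \ref{thm:semi-simple} is harmless but unnecessary (the paper applies the rank-one argument to $M$ directly), and your observation about retaining the $P$-grading rather than the coarser $K_i$-eigenspace grading throughout the rank-one arguments is a genuine subtlety that the paper's terse proof leaves implicit.
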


\begin{proof} \, For each $i \in I$, using the same argument in this section, one can verify that $M$ is completely reducible as a $U_{(i)}$-module.  Our assertions follow from the observation in Example \ref{ex:string}.
\end{proof}

\vskip 10mm


\begin{thebibliography}{7}

\bibitem{Bozec2014b} T. Bozec, {\em Quivers with loops and perverse sheaves}, Math.
Ann. {\bf 362} (2015), 773--797.

\bibitem{Bozec2014c} T. Bozec, {\em Quivers with loops and generalized
crystals}, Compositio Math. {\bf 152} (2016), 1999--2040.

\bibitem{BSV2016} T. Bozec, O. Schiffmann, E. Vasserot, {\em On the
number of points of nilpotent quiver varieties over finite fields},
arXiv:1701.01797.

\bibitem{CR} C. W. Curtis, I. Reiner, {\em Methods of Representation Theory, Vol. I}, John Wiley \& Sons Inc., New York, 1981.

\bibitem{HK02}
J.~Hong, S.-J. Kang, \emph{Introduction to Quantum Groups and
Crystal Bases}, Grad. Stud. Math. \textbf{42}, Amer. Math. Soc.,
Provodence, RI, 2002.

\bibitem{JKK05} K. Jeong, S.-J. Kang, M. Kashiwara, {\em Crystal bases
for quantum generalized Kac-Moody algebras}, Proc. London Math. Soc.
{\bf (3) 90} (2005), 395--438.

\bibitem{K2019} S.-J. Kang, {\em Borcherds–Bozec algebras, root multiplicities and the Schofield construction}, Communications in Contemporary Mathematics, {\bf 21} (2019), 1850031. 

\bibitem{KKS2009} 
S.-J. Kang, M. Kashiwara, O. Schiffmann, {\em Geometric construction of crystal bases for quantum generalized Kac-Moody algebras}, Adv. Math.{\bf 222}  (2009), 996--1015.

\bibitem{KKS2012} 
S.-J. Kang, M. Kashiwara, O. Schiffmann, {\em Geometric construction of highest weight crystals for quantum generalized Kac-Moody algebras},  Math. Ann. {\bf 354} (2012), 193--208. 

\bibitem{Kas91}
M. Kashiwara, {\em On crystal bases of the $q$-analogue of universal enveloping algebras}, Duke Math. J. {\bf 63} (1991), 465--516.

\bibitem{KS1997}
M. Kashiwara, Y. Saito, {\em Geometric construction of crystal bases}, Duke Math. J. {\bf 89} (1997), 9--36.

\bibitem{Lus90}
G. Lusztig, \emph{Canonical bases arising from quantized enveloping algebras}, J. Amer. Math. Soc. \textbf{3} (1990), 447--498.

\bibitem{Lus10} G. Lusztig, {\em Introduction to quantum groups}, Modern Birkh$\ddot{\rm a}$user Classics.
Birkh$\ddot{\rm a}$user/Springer, New York, 2010. Reprint of the 1994 edition.

\bibitem{Saito2002}
Y. Saito, {\em Crystal bases and quiver varieties}, Math. Ann. {\bf 324} (2002), 675--688.

\bibitem{Sweedler} M. E. Sweedler, \emph{Hopf Algebras}, W. A. Benjamin, Inc., New York, 1969.




\end{thebibliography}
\end{document}